\numberwithin{equation}{section}
\newtheorem{theorem}{Theorem}[section]
\newtheorem{lemma}[theorem]{Lemma}
\theoremstyle{remark}
\newtheorem{remark}{Remark}[section]
\theoremstyle{definition}
\newcommand{\xx}{\langle x\rangle}
\newcommand{\mmu}{\langle \mu\rangle}
\newcommand{\one}{\mathbf{1}}
\newcommand{\onep}{\one_{+}}
\newcommand{\sgn}{\mathop{\mathrm{sgn}}}
\begin{document}

\title%
[Steplike Potential]%
{Dispersive estimate for the 1D Schr\"odinger\\ 
equation with a steplike potential}

\begin{abstract}
  We prove a sharp dispersive estimate 
  \begin{equation*}
    |P_{ac}u(t,x)|\le C|t|^{-1/2}\cdot\|u(0)\|_{L^{1}(\mathbb{R})}
  \end{equation*}
  for the one dimensional Schr\"odinger equation
  \begin{equation*}
    iu_{t}-u_{xx}+V(x)u+V_{0}(x)u=0,
  \end{equation*}
  where $(1+x^{2})V\in L^{1}(\mathbb{R})$ and $V_{0}$ is a step
  function, real valued and consant on the positive and negative real
  axes.
\end{abstract}

\date{\today}    

\author{Piero D'Ancona}
\address{Piero D'Ancona: 
SAPIENZA - Universit\`a di Roma,
Dipartimento di Matematica, 
Piazzale A.~Moro 2, I-00185 Roma, Italy}
\email{dancona@mat.uniroma1.it}

\author[Sigmund Selberg]{Sigmund Selberg}
\address{Department of Mathematical Sciences\\
Norwegian University of Science and Technology\\
Alfred Getz' vei 1\\
N-7491 Trondheim\\ Norway}
\email{sigmund.selberg@math.ntnu.no}

\subjclass[2000]{
35J10, 
35Q40, 
58J45
}
\keywords{%
steplike potentials, 
Schr\"{o}dinger equation,
dispersive estimate}
\maketitle

\section{Introduction}\label{sec:intro} 

The \emph{dispersive estimate} for the Schr\"odinger equation
on $\mathbb{R}^{n}$, $n\ge1$,
\begin{equation*}
  iu_{t}-\Delta u=0,\qquad u(0,x)=f(x)
\end{equation*}
states that, for all $f\in L^{1}(\mathbb{R}^{n})$
and $t\neq0$, the solution satisfies
\begin{equation}\label{eq:dispgen}
  |u(t,x)|\le C|t|^{-\frac n2}\|f\|_{L^{1}(\mathbb{R}^{n})}.
\end{equation}
The sharp constant is $C=(4\pi)^{-n/2}$.
Estimate \eqref{eq:dispgen} is elementary and follows from
the explicit form of the fundamental solution; 
nevertheless, it represents the starting point
for a large number of important developments including Strichartz 
estimates and the local and global well posedness theory for 
nonlinear Schr\"odinger equations. Thus the problem of extending
\eqref{eq:dispgen} to more general equations has received a great
deal of attention.

Potential perturbations of the form
\begin{equation*}
  iu_{t}-\Delta u+V(x)u=0
\end{equation*}
(on $\mathbb{R}^{n}$, $n\ge3$) were considered in many papers,
starting with
\cite{JourneSofferSogge91-a}, with improvements at several
reprises (see e.g.~\cite{Yajima95-b}, \cite{Yajima95-a}, 
\cite{Yajima99-a},
\cite{GoldbergSchlag04-b},
\cite{DanconaPierfeliceTeta06-a}).
Focusing on the one dimensional case
\begin{equation}\label{eq:pot1D}
  iu_{t}-u_{xx}+V(x)u=0,
\end{equation}
which is the subject of this paper,
the first proof of the dispersive estimate
is surprisingly recent and due to Weder \cite{Weder00-b}. His result was
improved by Artbazar and Yajima \cite{ArtbazarYajima00}, who
actually proved the more general fact that the wave operator associated to
$-\frac{d^{2}}{dx^{2}}+V(x)$ is bounded on $L^{p}$ for all $p$.
Finally, Goldberg and Schlag \cite{GoldbergSchlag04-b}
proved \eqref{eq:dispgen}
for potentials satisfying $(1+x^{2}) V\in L^{1}(\mathbb{R})$,
or the weaker condition $(1+|x|) V\in L^{1}(\mathbb{R})$ plus an
additional nonresonant condition at $0$; these conditions on the
potential are conjectured to be optimal. Under the same assumptions
on $V$, the $L^{p}$ boundedness of the wave operator
was proved by D'Ancona and Fanelli \cite{DanconaFanelli06-a}.
We also mention that potentials with slower decay
present new phenomena as evidenced in \cite{ChristKiselev01-c},
\cite{ChristKiselev02-a}.

Thus the problem of dispersive estimates for
potential perturbations is essentially settled in 1D if the
potential is small in the appropriate sense
at infinity. Notice that we can add a real constant to $V$
without modifying the dispersive properties via the gauge
transformation $u\to u e^{i \lambda t}$. Hence a more precise
statement is that dispersion has been proved for 
potentials having the same asymptotic behaviour at $x\to\pm \infty$.

Here we consider a more general
kind of potential with possibly different asymptotic behaviours
at $+ \infty$ and $- \infty$.
We call a \emph{steplike potential} a potential of the form
\begin{equation*}
  V(x)+V_{0}(x)  
\end{equation*}
where
\begin{equation*}
  V\in L^{1}(\mathbb{R}), \qquad
  V_{0}(x)
  =
  \left\{ 
  \begin{array}{l@{\quad:\quad}l}
  V_{-} & x < 0 \\
  V_{+} & x > 0,
  \end{array}\right.
  \qquad
  V_{-},V_{+}\in \mathbb{R}.
\end{equation*}
It is not restrictive to assume $V_{-}<V_{+}$ as we shall do
from now on. 

In the physical literature, steplike potentials are also called
\emph{barrier potentials} and are used 
to model the interaction of particles
with the boundary of solids (see 
\cite{GesztesyNowellPotz97-a}
for a general
discussion of problems with nontrivial asymptotics).

Steplike potentials occur also in general relativity. 
An example is given by the radial Klein-Gordon equation
$\square_{g}u-m^{2}u=0$ on a (radial) Schwarzschild background
\begin{equation*}
  g=-\left(1-\frac{2M}{r}\right)dt^{2}+
    \left(1-\frac{2M}{r}\right)^{-1}dr^{2}
\end{equation*}
Here $M>0$ is related to the mass of the black hole and
$r>2M$ is a radial variable.
If we 
introduce the Regge-Wheeler coordinate
\begin{equation}\label{eq:RW}
  s=r+2M\log\left(\frac{r-2M}{2M}\right)
\end{equation}
and denote with $r(s)$ the inverse
function of \eqref{eq:RW}, the radial Klein-Gordon 
equation takes the form
\begin{equation*}
  u_{tt}-u_{ss}+v(s)u=0,\qquad
  v(s)=
  \left(m^{2}+\frac{2M}{r(s)^{3}}\right)
  \left(1-\frac{2M}{r(s)}\right).
\end{equation*}
It is easy to check that the asymptotic behaviour of $v(s)$ is
\begin{equation*}
  v(s)\sim
  \begin{cases}
    m^{2}+O(s^{-3}) &\text{for $ s\to+ \infty $}\\
    O(e^{(2M)^{-1}s}) &\text{for $ s\to-\infty $,}
  \end{cases}
\end{equation*}
so that the equation is reduced to
a wave equation perturbed with a steplike potential.

Despite the significance of this class of potentials, mathematical studies
have considered only the problem of direct and
inverse scattering, with the usual applications to the 
Korteweg-de Vries equation, following the
classical theory of \cite{Faddeyev63-a} and \cite{DeiftTrubowitz79-a}.
A quite detailed theory was established in
\cite{BuslaevFomin62-a},
\cite{CohenKappeler85-a},
\cite{DaviesSimon78-a},
\cite{Gesztesy86-a},
\cite{Aktosun99-a},
\cite{Christiansen06-a},
\cite{Cohen84-a},
\cite{Kappeler86-a}.
See also \cite{Boutet-de-MonvelEgorovaTeschl08-a} for a more
recent take on this class of problems.
.

On the other hand, to our knowledge, the dispersive properties
of evolution equations perturbed with steplike potentials have never
been investigated. Our goal here is to initiate this subject.
The basic model is the Schr\"odinger equation
\begin{equation}\label{eq:schrstep}
  iu_{t}-u_{xx} + V_{0}(x) u=0,\qquad
  u(0,x)=f(x)
\end{equation}
where as above $V_{0}(x)$ is a piecewise constant function equal to
$V_{+}$ for $x>0$ and to $V_{-}$ for $x<0$, $V_{-}<V_{+}$.
As a preparation to the study of more general 
potentials, in Section \ref{sec:proof1} we compute 
explicit kernels both for the resolvent
of the operator $-d^{2}/dx^{2}+V_{0}$ and for the fundamental solution
to \eqref{eq:schrstep}. As a first application, we
prove that any solution of \eqref{eq:schrstep} satisfies the
dispersive estimate
\begin{equation}\label{eq:dispstep}
  |u(t,x)|\le C\|f\|_{L^{1}} \cdot |t|^{-1/2},\qquad
  t\neq0.
\end{equation}
It is easy to check by a gauge transform and a rescaling
$u\to e^{i \lambda t}u(\alpha^{2}t,\alpha x)$ that the constant
in the dispersive estimate does not depend on $V_{\pm}$.

After this preliminary study of the model case \eqref{eq:dispstep},
we pass to the general situation of a Schr\"odinger equation
\begin{equation}\label{eq:schrstepgen}
  iu_{t}- u_{xx} +V_{0}u+V(x)u=0,\qquad
  u(0,x)=f(x)
\end{equation}
where $V_{0}$ is perturbed with a potential
$V(x)$ belonging to a suitable weighted $L^{1}$ class. 
In order to state our result, we recall that the operator
$-d^{2}/dx^{2}+V+V_{0}$ typically has a nonempty point spectrum,
contained in $(-\infty,0]$; since bound states do not disperse
we need to project them away.
We denote by $P_{ac}$ the projection on the absolutely continuous
subspace of $L^{2}(\mathbb{R})$ associated to the operator.
Then the main result of the paper is the following:

\begin{theorem}\label{the:steppot}
  For any $f\in L^{1}(\mathbb{R})$ and any real valued potential
  $V(x)$ satisfying 
  \begin{equation}\label{eq:stepVgen}
    (1+x^{2})V(x)\in L^{1}(\mathbb{R}),
  \end{equation}
  the solution $u(t,x)$
  to \eqref{eq:schrstepgen} satisfies the dispersive estimate
  \begin{equation}\label{eq:dispstepgen}
    |P_{ac}u(t,x)|\le C\|f\|_{L^{1}} \cdot |t|^{-1/2}.
  \end{equation}
\end{theorem}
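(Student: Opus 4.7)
The plan is to reduce the dispersive estimate for $H=-\partial_x^2+V_0+V$ to an oscillatory integral analysis of its spectral density, following the Goldberg--Schlag strategy for short-range potentials but built on the explicit resolvent kernel of $H_0=-\partial_x^2+V_0$ already derived in Section~\ref{sec:proof1}. By Stone's formula,
\begin{equation*}
  P_{ac} e^{-itH} f(x) = \frac{1}{2\pi i} \int_{V_-}^\infty e^{-it\lambda} \bigl[R_V(\lambda+i0) - R_V(\lambda-i0)\bigr] f(x) \, d\lambda,
\end{equation*}
so the proof reduces to suitable pointwise bounds on the kernel of the spectral projector. A natural parametrisation splits the integral over the two spectral bands $(V_-,V_+)$ and $(V_+,\infty)$, on which the free resolvent kernel has qualitatively different structure (oscillating on one side and exponentially decaying on the other, versus fully oscillating on both).

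The next step is to construct Jost solutions $\psi_\pm(x,k)$ of $Hu=\lambda u$ as perturbations of the free step Jost solutions via standard Volterra integral equations. The weighted hypothesis $(1+x^2)V\in L^1$ is used, exactly as in the Deift--Trubowitz theory, to obtain enough regularity of $\psi_\pm$ jointly in $x$ and in the spectral variable up to each threshold. The kernel of $R_V(\lambda\pm i0)$ can then be written in the familiar form $\psi_-(x,k)\psi_+(y,k)/W(k)$ (with a symmetric expression for $x>y$), where $W$ is the appropriate Jost Wronskian; the projection $P_{ac}$ removes the contribution of the finitely many zeros of $W$ lying off the continuous spectrum.

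Given this representation, the kernel of $P_{ac}e^{-itH}$ becomes, on each band, an oscillatory integral of the form $\int e^{-it\varphi(k)}\,a(x,y,k)\,dk$, and the desired $|t|^{-1/2}$ decay follows from a van der Corput / stationary phase argument once $a$ is shown to be bounded with integrable derivative in $k$. This can be organised by a smooth cutoff separating high and low frequencies: at high frequency a Born series for $R_V$ in terms of $R_{V_0}$ converges absolutely and reduces the bound to the estimate for $H_0$ proved in Section~\ref{sec:proof1}; at low frequency the explicit Volterra construction controls $\psi_\pm$ and $1/W$ up to the thresholds.

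The main obstacle is precisely this low-frequency analysis near the two thresholds $V_-$ and $V_+$. Unlike the pure short-range case on $\mathbb{R}$, which has a single threshold at $0$, the steplike setting has an intermediate threshold at $V_+$ where one Jost solution switches from exponentially decaying to purely oscillatory and the spectral multiplicity jumps from one to two. Ruling out a resonance at each threshold, bounding the corresponding Wronskian from below, and showing that the transition at $V_+$ does not generate an amplitude singularity incompatible with $|t|^{-1/2}$ decay will be the most delicate part; once this is handled, the remaining oscillatory estimates should be routine adaptations of the Goldberg--Schlag technique.
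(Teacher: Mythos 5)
Your overall strategy coincides with the paper's: spectral representation of $P_{ac}e^{itH}$, Jost solutions built on the free step resolvent of Section~\ref{sec:proof1}, the kernel $f_+f_-/W$, a high/low frequency split with a Born series in $R_0$ at high frequency (where indeed only $V\in L^1$ is needed) and van der Corput estimates at low frequency. You have also correctly located the crux at the intermediate threshold $V_+$ (after normalization, $\lambda=\pm1$, where $\rho_+$ has a square-root singularity and one Jost solution changes from oscillatory to exponentially decaying). But your proposal stops exactly there: you declare this "the most delicate part" and defer it, and that part is not a routine adaptation of Goldberg--Schlag. Concretely, for $0<y<x$ and $|\lambda|<1$ the factor $f_-(\lambda,y)$ in the kernel grows exponentially in $y$, and the usual device of expanding $f_-$ in $f_+$ and $\overline{f_+}$ fails because $f_+$ is real there. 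The paper's resolution is to build a second solution $g_+$ by reduction of order from two-sided bounds $\tfrac12 e^{-(1-\lambda^2)^{1/2}x}\le f_+\le\tfrac32 e^{-(1-\lambda^2)^{1/2}x}$, write $f_-=A(\lambda)g_++B(\lambda)f_+$ as in \eqref{eq:lincombg}, show $A\sim(1-\lambda^2)^{-1/2}$ exactly cancels the Wronskian via \eqref{eq:idwro}, and then control the resulting amplitudes; the decisive estimate exchanges the order of integration to bound everything by $\int_0^\infty\sigma_V(s)\,ds$ with $\sigma_V(s)=\int_s^\infty(1+|\xi|)|V(\xi)|\,d\xi$, which is precisely where the hypothesis $(1+x^2)V\in L^1$ is consumed. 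Without an argument of this kind your outline does not yield a bound uniform in $x,y$.

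A second, smaller issue: you propose "ruling out a resonance at each threshold." Under the stated hypotheses you cannot rule out a resonance at the lower threshold, and the theorem makes no spectral assumption that would let you. The paper instead admits the exceptional case $W(0)=0$ and absorbs it using the factor $\lambda$ present in the spectral measure together with $\dot W(0)=i\gamma\neq0$ from Theorem~\ref{the:Wlambda}, so that $\lambda\chi(\lambda)/W(\lambda)$ stays continuous. At the upper threshold there is no resonance to exclude either; the apparent singularity $1/\rho_+$ is handled by the change of variables $\mu=\rho_+(\lambda)$ and by the cancellation between $A(\lambda)$ and $W(\lambda)$ noted above. So the plan as written would either need an extra hypothesis the theorem does not have, or needs to be replaced by these cancellation arguments.
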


Section \ref{sec:general_step_pot} is devoted to the proof of
Theorem \ref{the:steppot}. As usual, we need to treat high 
frequencies
and low frequencies with separate methods. Notice that for
the high frequency part of the solution, dispersion can be
proved under the weaker assumption $V\in L^{1}(\mathbb{R})$.
The low frequency part is more difficult to estimate, and
requires some rather precise information on the asymptotic
behaviour of Jost solutions for the corresponding Helmholtz
equation.

\begin{remark}\label{rem:optimal}
  It is natural to question the optimality of the assumption
  $(1+x^{2})V\in L^{1}$. A reasonable conjecture is that
  $(1+x^{2})^{\gamma/2}V(x)\in L^{1}$ with $\gamma\ge1$
  should be enough, provided some spectral assumption is
  made to exclude resonance at 0. In the classical case
  $V_{0}\equiv 0$, the standard assumption is
  that the equation
  \begin{equation*}
    -f''+V(x)f=0
  \end{equation*}
  has two linearly independent solutions $f_{+},f_{-}$ 
  with the asymptotic behaviour
  \begin{equation*}
    f_{+}\sim 1\ \text{as $x\to+\infty$,}\qquad
    f_{-}\sim 1 \ \text{as $x\to-\infty$.}
  \end{equation*}
  Potentials $V(x)$ satisfying this condition are called
  \emph{generic}, while they are called \emph{exceptional} when the
  condition fails. Notice that $V \equiv 0$ is of exceptional type.
  When $V_{0}$ is not zero, e.g., the Heaviside function, the
  asymptotic for $f_{+}$ should be modified to require
  $f_{+}\sim e^{-x}$ as $x\to+\infty$.
  We prefer not to pursue here the delicate question of the optimal
  assumptions on the potential.
\end{remark}

The main application of dispersive estimates concerns
problems of global well posedness and stability for
nonlinear perturbations of the equation. We shall devote further
works to this aspect, while here we shall focus on the linear
estimates exclusively.


\section{Resolvent operator and dispersive estimate for
$-d^{2}/dx^{2}+V_{0}$}
\label{sec:proof1}  

Throughout the paper, for complex numbers $z$
we denote by
\begin{equation*}
  z^{1/2}
  \ \text{the square root of $z$ with $\Im z^{1/2}\ge0$.}
\end{equation*}
For any $z\not\in \mathbb{R}$,
let $R_{0}(z)=(H_{0}-z)^{-1}$  be the resolvent of the
selfadjoint operator $H_{0}=-\frac{d^{2}}{dx^{2}}+V_{0}$, where
$V_{0}$ is the piecewise constant function
\begin{equation*}
  V_{0}=V_{-}\quad\text{for $x<0$},\qquad
  V_{0}=V_{+}\quad\text{for $x>0$}.
\end{equation*}
Both the selfadjointness of $H_{0}$ and the fact that
$\sigma(H_{0})=[V_{-},+\infty)$ follow from the standard theory.
Denoting by $r_{\pm}(z)$ the functions
\begin{equation}\label{eq:rpm}
  r_{\pm}=(z-V_{\pm})^{1/2}, \qquad
  z\not\in [V_{-},+\infty)
\end{equation}
we see that $R_{0}$ can be represented as an integral operator
\begin{equation*}
  R_0(z)f=\int K^{0}_{z}(x,y)f(y)dy
\end{equation*}
where the kernel $K^{0}_{z}(x,y)$ is expressed by the following
formulas: for all $y<x\in \mathbb{R}$,
\begin{equation}\label{eq:Kz}
  K^{0}_{z}(x,y)=K^{0}_{z}(y,x)=
  \begin{cases}
    \frac{1}{2ir_{-}}e^{ir_{-}(x-y)}+
    \frac{1}{2ir_{-}}
    \frac{r_{-}-r_{+}}{r_{-}+r_{+}}e^{ir_{-}(-x-y)}
     &\text{if $ y<x<0 $,}\\
    \frac{1}{i(r_{+}+r_{-})}e^{ir_{+}x}e^{-ir_{-}y}
     &\text{if $ y<0<x $,}\\
   \frac{1}{2ir_{+}}
   \frac{r_{-}-r_{+}}{r_{-}+r_{+}}e^{ir_{+}(x+y)}+
   \frac{1}{2ir_{+}}e^{ir_{+}(x-y)}
     &\text{if $ 0<y<x $.}
  \end{cases}
\end{equation}
The explicit formula for $K^{0}_{z}$ can be computed
by an elementary application of the standard
theory of ordinary differential equations. Notice that the kernel has
a well defined limit as $z$ approaches a point of the spectrum of
$H_{0}$ from above and from below (with different limits).

The spectral formula allows to represent any function $\phi(H_{0})$
of the operator, for sufficiently nice $\phi$, as the 
$L^{2}$ limit
\begin{equation}\label{eq:spec}
  \phi(H_{0})f=\lim_{\epsilon \downarrow0}
  \int_{\gamma_{\epsilon}} \phi(z)R_0(z)fdz
\end{equation}
over a curve $\gamma_{\epsilon}$ which in the present case can 
be taken as
the union of the straight half lines $z=\lambda\pm i \epsilon$,
$\lambda>V_{-}$, with the left semicircle of radius
$\epsilon$ around $z=V_{-}$, from $+\infty-i \epsilon$ to
$+\infty+i \epsilon$. After the change of variables 
$z= w^{2}+V_{-}$ we can rephrase the spectral formula as
\begin{equation}\label{eq:spec1}
  \phi(H_{0})f=\int_{-\infty}^{+\infty}
  \phi(\lambda+V_{-})\lambda R_0(\lambda^{2}+V_{-})f d \lambda
\end{equation}
where the limit operator
\begin{equation*}
  R_0(\lambda^{2}+V_{-})f=\int K_{\lambda}(x,y)f(y)dy
\end{equation*}
has a kernel $K_{\lambda}$ given by the following expressions
valid for all $y<x\in \mathbb{R}$:
\begin{equation}\label{eq:Kl}
  K_{\lambda}(x,y)=K_{\lambda}(y,x)=
  \begin{cases}
    \frac{1}{2i \lambda}e^{i \lambda(x-y)}+
    \frac{1}{2i\lambda}
    \frac{\lambda-\rho_{+}}{\lambda+\rho_{+}}e^{i\lambda(-x-y)}
     &\text{if $ y<x<0 $,}\\
    \frac{1}{i(\rho_{+}+\lambda)}e^{i\rho_{+}x}e^{-i\lambda y}
     &\text{if $ y<0<x $,}\\
   \frac{1}{2i\rho_{+}}
   \frac{\lambda-\rho_{+}}{\lambda+\rho_{+}}e^{i\rho_{+}(x+y)}+
   \frac{1}{2i\rho_{+}}e^{i\rho_{+}(x-y)}
     &\text{if $ 0<y<x $}
  \end{cases}
\end{equation}
and the function $\rho_{+}(\lambda)$ is defined as
\begin{equation}\label{eq:rho}
  \rho_{+}(\lambda)=
  \begin{cases}
    (\lambda^{2}-\delta^{2})^{1/2}
     &\text{if $ \lambda>\delta $,}\\
    i(\delta^{2}-\lambda^{2})^{1/2}
     &\text{if $ -\delta<\lambda<\delta $,}\\
    -(\lambda^{2}-\delta^{2})^{1/2}
     &\text{if $ \lambda<-\delta $.}
  \end{cases}\qquad
  \delta^{2}=V_{+}-V_{-}\ge0.
\end{equation}
Notice that \eqref{eq:Kl} is a simple instance
of the limiting absorption principle; indeed, the kernel $K_{\lambda}$
is a bounded function and hence defines a bounded operator 
from $L^{2}(\xx^{1+}dx)$ to $L^{2}(\xx^{-1-}dx)$ as in the standard
theory.

We can now use \eqref{eq:spec1}, \eqref{eq:Kl}, \eqref{eq:rho}
to represent the solution of the Schr\"odinger equation 
\eqref{eq:schrstep} via the formula
\begin{equation}\label{eq:expl}
  e^{itH_{0}}f=
  e^{itV_{-}}
  \int_{-\infty}^{+\infty}\left(
  \int_{\mathbb{R}}
  e^{it \lambda^{2}}\lambda
  K_{\lambda}(x,y)f(y)dy
  \right)d \lambda.
\end{equation}
The factor $e^{itV_{-}}$ is inessential and will be
dropped from now on (i.e.~we can assume $V_{-}=0$). 
Moreover, as remarked in the Introduction, by a rescaling we
can assume $\delta=1$. Thus from now on the function $V_{0}$
will be taken equal to the Heaviside function
\begin{equation*}
  V_{0}(x)=\one_{+}(x)=\one_{[0,+\infty)}(x).
\end{equation*}

In order to
swap the integrals and perform some stationary phase calculations we
introduce a cutoff function with the following properties:
$\chi\in C^{\infty}_{c}(\mathbb{R})$ is an even function,
\begin{equation}\label{eq:chi}
  \chi(\lambda)=
  \begin{cases}
    0 &\text{if $ |\lambda|\ge8 $,}\\
    1 &\text{if $ |\lambda|\le4 $,}
  \end{cases}
  \qquad
  |\chi'|\le1,
  \qquad
  |\chi''|\le1.
\end{equation}
Then we shall study the operator
\begin{equation}\label{eq:trunc}
  \chi_{M}(H_{0})e^{itH_{0}}f=
  \int_{\mathbb{R}}
  \int_{-\infty}^{+\infty}
  e^{it \lambda^{2}}\lambda \chi_{M}(\lambda)
  K_{\lambda}(x,y)d \lambda\ f(y)
  dy,
  \qquad
  \chi_{M}(\lambda)=\chi\left(\frac{\lambda}{M}\right).
\end{equation}
Notice that, in order to prove \eqref{eq:dispstep},
it will be sufficient to prove a dispersive $L^{1}-L^{\infty}$
estimate for the truncated operator \eqref{eq:trunc}, 
uniform in $M>>1$.

Finally, it is clearly sufficient to prove the pointwise estimate 
\begin{equation}\label{eq:ptw}
  \left|
  \int_{-\infty}^{+\infty}
  e^{it \lambda^{2}}\lambda \chi_{M}(\lambda)
  K_{\lambda}(x,y)d \lambda
  \right|\le
  C|t|^{-1/2}
\end{equation}
for some constant $C$ independent of $M,x$ and $y$.
The rest of this section is devoted to the proof of \eqref{eq:ptw}.
By symmetry
we can assume $y<x$, and recalling \eqref{eq:Kl} we shall handle
separately the three cases $y<x<0$, $y<0<x$ and $0<y<x$.

\subsection{First case: $y<x<0$}\label{sub:first_case}  

Apart from inessential numeric factors, we must estimate the two terms
\begin{equation}\label{eq:Ia}
  v(t,A)=\int  
    \chi_{M}(\lambda)
    e^{it \lambda^{2}}
    e^{i \lambda A}
    d \lambda,\qquad
    A=x-y>0
\end{equation}
and
\begin{equation}\label{eq:Ib}
  w(t,A)=\int 
    \chi_{M}(\lambda)
    \frac{\lambda-\rho_{+}}{\lambda+\rho_{+}}
    e^{it \lambda^{2}}
    e^{i\lambda A}
    d \lambda,\qquad
    A=-x-y>0
\end{equation}
uniformly in $M$ and $y<x<0$. 
By a standard trick
we regard $v(t,A)$ as a solution of the 1D Schr\"odinger equation
\begin{equation*}
  iv_{t}-v_{AA}=0
\end{equation*}
and hence we can apply the usual dispersive estimate
\begin{equation*}
  |v|\le |t|^{-1/2}\|v(0,A)\|_{L^{1}_{A}},\qquad
  \|v(0,A)\|_{L^{1}_{A}}=\|\widehat{\chi_{M}}\|_{L^{1}}=
  \|\widehat{ \chi}\|_{L^{1}}.
\end{equation*}
Thus we get
\begin{equation}\label{eq:Iaest}
  |v|\le \|\widehat{ \chi}\|_{L^{1}} \cdot|t|^{-1/2}
\end{equation}

A similar strategy applied to $w(t,A)$ gives
\begin{equation*}
  |w|\le |t|^{-1/2}\|I\|_{L^{1}_{A}}
\end{equation*}
with
\begin{equation}\label{eq:I}
  I=
  \int
  \frac{\lambda-\rho_{+}}{\lambda+\rho_{+}}
  e^{i \lambda A}
  \chi_{M}(\lambda)
  d \lambda.
\end{equation}
Notice that for $\lambda>1$ (recall that $\delta=1$)
\begin{equation*}
  \frac{\lambda-\rho_{+}}{\lambda+\rho_{+}}=
  \frac{1}{(\lambda+\rho_{+})^{2}}=
  \frac{1}{\left(\lambda+(\lambda-1)^{1/2}\right)^{2}}
\end{equation*}
and analogously for 
$\lambda<-1$, $-1<\lambda<1$, thus we have
\begin{equation*}
  I=
  \int \frac{1}{(\lambda+\rho_{+})^{2}}
  e^{i \lambda A }\chi_{M}d \lambda.
\end{equation*}
Introduce now an additional cutoff $\psi(\lambda)$
in the integral to isolate the H\"older singularity around $\lambda=1$
of $\rho_{+}$, i.e., choose $\psi\in C^{\infty}(\mathbb{R})$ with
\begin{equation*}
  \psi(\lambda)=
  \begin{cases}
    1 &\text{if $ \lambda\ge3 $,}\\
    0 &\text{if $ \lambda\le2 $}
  \end{cases}
\end{equation*}
and consider the corresponding piece of the integral
\begin{equation}\label{eq:Ib01}
  I_{1}=
  \int
  \frac{\psi}{(\lambda+\rho_{+})^{2}}
  e^{i \lambda A}
  \chi_{M}(\lambda)
  d \lambda.
\end{equation}
Two integrations by parts give
\begin{equation*}
  \|I_{1}\|_{L^{1}_{A}}\le
  \left\|
    \frac{\psi \chi_{M}}{(\lambda+\rho_{+})^{2}}
  \right\|_{W^{2,1}_{\lambda}}
\end{equation*}
which is uniformly bounded for $M\ge1$.

The piece of $I$ near $\lambda=1$ is more delicate.
We choose a new cutoff which we denote again by
$\psi\in C^{\infty}_{c}$ supported in
$[-1/2,3]$ and consider $I_{2}$ given by
\begin{equation}\label{eq:Ib02}
  I_{2}=
  \int
  \frac{\lambda-\rho_{+}}{\lambda+\rho_{+}}
  e^{i \lambda A}
  \psi
  \chi_{M}
  d \lambda=
  \int
  \frac{\lambda-\rho_{+}}{\lambda+\rho_{+}}
  e^{i \lambda A}
  \psi
  d \lambda
\end{equation}
where we can suppress $\chi_{M}$ since $\psi \chi_{M}\psi$ for $M>1$. 
We decompose the singular factor into even and odd part using the identity
\begin{equation*}
  \frac{\lambda-s}{\lambda+s}=
  \frac{\lambda^{2}+s^{2}}{\lambda^{2}-s^{2}}-
  \frac{2s \lambda}{\lambda^{2}-s^{2}}
\end{equation*}
obtaining (recall $\rho_{+}^{2}=\lambda^{2}-\delta^{2}=\lambda^{2}-1$)
\begin{equation*}
  \frac{\lambda-\rho_{+}}{\lambda+\rho_{+}}=
  (2 \lambda^{2}-1)- 2 \lambda \rho_{+}.
\end{equation*}
The term in $(2 \lambda^{2}-1)$ is trivial since
(apart from numeric factors)
\begin{equation*}
  \int(2 \lambda^{2}-1)\psi e^{i \lambda A}d \lambda=
  -2\widehat{\psi }''(A)-\widehat{\psi }(A)
\end{equation*}
which is certainly $L^{1}$ bounded. On the other hand, we can split
\begin{equation*}
  \int \lambda\psi \rho_{+}e^{i \lambda A}d \lambda=
  \int_{1}^{+\infty}+\int_{-\infty}^{1}
\end{equation*}
and we shall focus on the first term since the second one is 
entirely analogous. We rewrite it as
\begin{equation*}
  \int_{1}^{+\infty}\lambda\psi \rho_{+}e^{i \lambda A}d \lambda=
  \int_{0}^{\infty}\psi_{1}(\lambda)\lambda^{1/2}e^{i \lambda A}d \lambda
     \cdot e^{iA},\qquad
  \psi_{1}(\lambda)=(\lambda+1)^{3/2}\psi(\lambda+1)
\end{equation*}
and notice that, again, $\psi_{1}\in C^{\infty}_{c}$. 
This kind of integral is standard; applying e.g. estimate \eqref{eq:phase2}
in the Appendix we get
\begin{equation*}
  \left|
    \int_{1}^{+\infty}\lambda\psi \rho_{+}e^{i \lambda A}d \lambda
  \right|\le C(\psi) |A|^{-3/2},
\end{equation*}
while we have directly
\begin{equation*}
  \left|
    \int_{1}^{+\infty}\lambda\psi \rho_{+}e^{i \lambda A}d \lambda
  \right|\le C(\psi).
\end{equation*}
Summing up we obtain  $\|I_{2}\|_{L^{1}_{A}}\le C$ as required.
The estimate of the remaining part of $I$ is identical, using
cutoffs supported in $(-\infty,-2]$ and $[-3.1/2]$ as above.


\subsection{Second case: $y<0<x$}\label{sub:second_case}  

The quantity \eqref{eq:ptw} takes the form ($y\to-y$)
\begin{equation*}
  \int \frac{\lambda}{\rho_{+}+\lambda}
  e^{i \rho_{+}x}e^{i \lambda y}e^{i\lambda^{2}t}
  \chi_{M}d \lambda,\qquad
  x,y>0.
\end{equation*}
We split the integral in three parts
\begin{equation*}
  I+II+III=\int_{1}^{+\infty}+\int_{-1}^{1}+\int_{-\infty}^{-1}.
\end{equation*}
After a reflection $\lambda\to-\lambda$ 
and a conjugation in $III$ we see that
parts $I$ and $III$ have the form
\begin{equation*}
  I,III=
  \int_{1}^{\infty} \frac{\lambda}{\rho_{+}+\lambda}
  e^{i \rho_{+}x}e^{i \lambda y}e^{\pm i\lambda^{2}t}
  \chi_{M}d \lambda,\qquad
  x,y>0.
\end{equation*}
The basic tool will be the following Lemma:

\begin{lemma}\label{lem:vdc}
  For all $b>1$, $x,y>0$,
  \begin{equation}\label{eq:vdc}
    \left|
    \int_{1}^{b}
     \frac{\lambda}{\rho_{+}+\lambda}
     e^{i \rho_{+}x}e^{i \lambda y}e^{\pm i\lambda^{2}t}
   \right|\le 15|t|^{-1/2}.
  \end{equation}
\end{lemma}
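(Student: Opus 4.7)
The plan is to estimate the integral by the van der Corput lemma with second-derivative bound applied to an amplitude of bounded variation. Set
\[
a(\lambda)=\frac{\lambda}{\rho_{+}(\lambda)+\lambda},\qquad
\Phi_{\pm}(\lambda)=\rho_{+}(\lambda)x+\lambda y\pm\lambda^{2}t,
\]
so the integral in \eqref{eq:vdc} equals $\int_{1}^{b}a(\lambda)e^{i\Phi_{\pm}(\lambda)}\,d\lambda$. The function $a$ is monotone decreasing from $a(1)=1$ to $\lim_{\lambda\to\infty}a(\lambda)=1/2$, so it is bounded by $1$ with total variation at most $1/2$. The goal is therefore to produce a pointwise lower bound $|\Phi_{\pm}''|\ge 2|t|$ on $(1,b)$; van der Corput then yields the estimate $C|t|^{-1/2}$ with a universal constant $C$, which fits within $15$ after accounting for $\sup|a|+V(a)\le 3/2$.

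A direct differentiation gives
\[
\Phi_{\pm}''(\lambda)=-\frac{x}{(\lambda^{2}-1)^{3/2}}\pm 2t.
\]
When $\pm t\le 0$, both summands have the same non-positive sign, so $|\Phi_{\pm}''(\lambda)|\ge 2|t|$ on $(1,b)$, and the van der Corput estimate applies directly in the $\lambda$ variable, producing the desired bound.

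The main obstacle is the remaining case $\pm t>0$, in which the two terms of $\Phi_{\pm}''$ have opposite signs and can nearly cancel, so the second derivative in $\lambda$ cannot be bounded away from $0$ uniformly. To treat this case I would perform the change of variable $\mu=\rho_{+}(\lambda)=(\lambda^{2}-1)^{1/2}$, which is smooth on $(1,\infty)$ and transforms the integral into
\[
e^{\pm it}\int_{0}^{\sqrt{b^{2}-1}}\frac{\mu}{\mu+(\mu^{2}+1)^{1/2}}\,
e^{i[\mu x+(\mu^{2}+1)^{1/2}y\pm\mu^{2}t]}\,d\mu.
\]
The new phase $\widetilde\Phi_{\pm}(\mu)$ satisfies
\[
\widetilde\Phi_{\pm}''(\mu)=\frac{y}{(\mu^{2}+1)^{3/2}}\pm 2t,
\]
whose two summands are now \emph{both} positive, so $|\widetilde\Phi_{\pm}''(\mu)|\ge 2|t|$ uniformly. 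The new amplitude $\mu/(\mu+(\mu^{2}+1)^{1/2})$ is monotone increasing from $0$ to $1/2$ and hence again of bounded variation $\le 1/2$, so van der Corput applies as before.

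The crux of the argument, and what I expect to be the only subtle step, is the choice of the correct representation in each of the two cases: the substitution $\mu=\rho_{+}(\lambda)$ interchanges the roles of $x$ and $y$ in the second derivative of the phase. It moves the singular weight $(\lambda^{2}-1)^{-3/2}$ off the $x$-term, where it was competing against $\pm 2t$, and replaces it by a bounded positive weight $(\mu^{2}+1)^{-3/2}$ attached to $y$. Once the correct variable is selected so that the two summands of the second derivative agree in sign, the van der Corput bound $C(2|t|)^{-1/2}(\sup|\cdot|+V(\cdot))$ easily produces a constant strictly less than $15$, uniformly in $b>1$ and $x,y>0$.
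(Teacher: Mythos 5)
Your proposal is correct and follows essentially the same route as the paper: the same case split according to the sign of $\pm t$, the same substitution $\mu=\rho_{+}(\lambda)$ precisely when the two terms of $\Phi''$ could cancel (so that the singular weight migrates from the $x$-term to a harmless bounded weight on the $y$-term), and the same van der Corput estimate with a monotone amplitude of total variation at most $1/2$. The only cosmetic difference is that the paper normalizes the phase as $e^{it\phi}$ with $\phi''\ge 2$ and notes that the integrable singularity of $\phi''$ at $\lambda=1$ is handled by truncating to $[1+\epsilon,b]$ and letting $\epsilon\downarrow 0$.
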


\begin{proof}
  We recall one form of the van der Corput Lemma
  (see \cite{Stein86-a}:
  for any $C^{2}$ function $\phi:[a,b]\to \mathbb{R}$ such that
  $|\phi''|\ge1$ on $[a,b]$,
  \begin{equation}\label{eq:van}
    \left|
    \int_{a}^{b}
      e^{i \phi(\lambda)t}\psi(\lambda)d \lambda
    \right|
    \le 10 |t|^{-1/2}
    \left[
      \psi(b)+
      \int_{a}^{b}|\psi'(\lambda)|d \lambda
    \right]
  \end{equation}
  Consider the $+$ case in \eqref{eq:vdc}; we can assume $t>0$.
  After the change of variables
  \begin{equation*}
    \mu=\rho_{+}(\lambda)\implies \lambda=\mmu,\quad
    \lambda\ d \lambda=\mu d\mu
  \end{equation*}
  the integral becomes
  \begin{equation*}
    I_{+}=\int_{0}^{\sqrt{b^{2}-1}}\frac{\mu}{\mu+\mmu}
        e^{i \mu x}
        e^{i\mmu y}
        e^{i(\mu^{2}+1)t}
        d\mu.
  \end{equation*}
  We apply now \eqref{eq:van} with the choice
  \begin{equation*}
    \phi(\mu)=\mu^{2}+1+\mu \frac x t + \mmu \frac y t
    \qquad \implies \qquad
    \phi''=2+\frac{1}{\mmu^{3}} \frac y t\ge 2
  \end{equation*}
  and we obtain
  \begin{equation*}
    |I_{+}|\le
    \frac{10}{|t|^{1/2}}
    \left[
      \frac{\sqrt{b^{2}-1}}{b+\sqrt{b^{2}-1}}+
      \int_{0}^{\sqrt{b^{2}-1}}
      \left|
      \left(
      \frac{\mu}{\mu+\mmu}
      \right)'
      \right|d\mu
      \right].
  \end{equation*}
  Noticing that
  \begin{equation*}
    \left(
    \frac{\mu}{\mu+\mmu}
    \right)'=
    \frac{1}{\mmu(\mu+\mmu)^{2}}
  \end{equation*}
  we arrive at \eqref{eq:vdc}.
  
  Consider now the case of a minus sign in \eqref{eq:vdc}. 
  We take as a phase function
  \begin{equation*}
    \phi=-\lambda^{2}+\frac y t\lambda +\frac x t \sqrt{\lambda^{2}-1}
    \qquad\implies \qquad
    \phi''=-2-\frac{1}{(\lambda^{2}-1)^{3/2}}\frac x t\le-2
  \end{equation*}
  (the singularity of the phase at $\lambda=1$ can
  obviously be overcome by restricting on an interval
  $[1+\epsilon,b]$ and letting $\epsilon \downarrow0$, since the
  final estimate is uniform in $\epsilon$).  As above we obtain
  \begin{equation*}
    |I_{-}|\le
    \frac{10}{|t|^{1/2}}
    \left[
    \frac{b}{b+\sqrt{b^{2}-1}}
    +\int_{1}^{b}
    \left|
    \left(
    \frac{\lambda}{\lambda+\sqrt{\lambda^{2}-1}}
    \right)'\right|d \lambda
    \right].
  \end{equation*}
  Since
  \begin{equation*}
    \left(
    \frac{\lambda}{\lambda+\sqrt{\lambda^{2}-1}}
    \right)'=
    \frac{1}
      {\sqrt{\lambda^{2}-1}(\lambda+\sqrt{\lambda^{2}-1})^{2}}
  \end{equation*}
  we conclude again with \eqref{eq:vdc}.
\end{proof}

Now in order to estimate $I$
(or $III$) it is sufficient to write it as
\begin{equation*}
  I=F *_{x} G
\end{equation*}
with
\begin{equation*}
  F=
  \int_{0}^{8M}\frac{\lambda}{\lambda+\rho_{+}}
  e^{i \rho_{+} x}
  e^{i\lambda y}
  e^{i \lambda^{2}  t}d \lambda,\qquad
  G=\int e^{i \lambda x}\chi(\lambda/M)d \lambda
\end{equation*}
and apply Young's inequality and \eqref{eq:vdc}
\begin{equation*}
  |I|\le \|F\|_{L^{\infty}_{x,y}}\|G\|_{L^{1}_{x}}\le
     15|t|^{-1/2}\|\chi\|_{L^{1}}.
\end{equation*}

For the remaining piece $II$ we need a refinement of the van der Corput
Lemma \eqref{eq:van} which is proved as follows. 
If the function $\psi(\lambda)$ is real valued and
monotonic on $[a,b]$,
we can rewrite \eqref{eq:van} in the equivalent form
\begin{equation*}
  \left|
  \int_{a}^{b}
    e^{i \phi(\lambda)t}\psi(\lambda)d \lambda
  \right|
  \le 10 |t|^{-1/2}
  \left[
    \psi(b)+
    \left|
    \int_{a}^{b}\psi'(\lambda)d \lambda\right|
  \right]\le
  30\|\psi\|_{L^{\infty}}|t|^{-1/2}.
\end{equation*}
As a consequence, if the phase $\phi(\lambda)$
satisfies $|\phi''|\ge1$ on $(a,b)$ and
if both $\Im \psi$ and $\Re \psi$
make at most $N$ oscillations on
the interval $[a,b]$, the following estimate holds
\begin{equation}\label{eq:van2}
  \left|
  \int_{a}^{b}
    e^{i \phi(\lambda)t}\psi(\lambda)d \lambda
  \right|
  \le 60 N \|\psi\|_{L^{\infty}} |t|^{-1/2}
\end{equation}
independently of $a,b$.

Cnsider now the term $II$:
\begin{equation*}
  II=\int_{-1}^{+1} \frac{\lambda}{\rho_{+}+\lambda}
  e^{i \rho_{+}x}e^{i \lambda y}e^{it \lambda^{2}}
  d \lambda,
\end{equation*}
recalling that in the range $-1<\lambda<1$ we have 
$\rho_{+}(\lambda)=i(1-\lambda^{2})^{1/2}$ and
$\chi_{M}=1$.
We shall use estimate \eqref{eq:van2} with the choices
\begin{equation*}
  \phi=\lambda^{2}+\lambda \frac{y}{t}
  \ \implies\ \phi''\ge2
\end{equation*}
and
\begin{equation*}
  \psi=\frac{\lambda}{\rho_{+}+\lambda}e^{i \rho_{+}x}=
     (\lambda^{2}-i \lambda (1-\lambda^{2})^{1/2}))
     e^{-(1-\lambda^{2})^{1/2} x}.
\end{equation*}
It is trivial to check that both the real and the imaginary
part of $\psi$ make at most 2 oscillations on $[-1,1]$,
while $|\psi|\le1$, and in conclusion
\begin{equation*}
  |II|\le 120|t|^{-1/2}.
\end{equation*}


\subsection{Third case: $0<y<x$}\label{sub:third_case}  

Apart from a factor $2i$,
the quantity \eqref{eq:ptw} in this case is the sum of two terms
\begin{equation*}
  w(t,A)=\int \frac{\lambda}{\rho_{+}}
  e^{i \rho_{+}A}e^{it \lambda^{2}}
  \chi_{M}d \lambda,\qquad
  A=x-y>0
\end{equation*}
and
\begin{equation*}
  v(t,A)=\int \frac{\lambda}{\rho_{+}}
  \frac{\rho_{+}-\lambda}{\rho_{+}+\lambda}
  e^{i \rho_{+}A}e^{it \lambda^{2}}
  \chi_{M}d \lambda,\qquad
  A=x+y>0.
\end{equation*}
Now $w(t,A)$ solves the modified Schr\"odinger equation
\begin{equation*}
  iw_{t}-w_{AA}+\delta^{2}w=0 \qquad \implies \qquad
  |w|=|e^{-i\delta^{2}t}w|\le|t|^{-1/2}\|w(0,A)\|_{L^{1}_{A}},
\end{equation*}
and hence we are reduced to give a uniform $L^{1}_{A}$ bound
of the integral
\begin{equation}\label{eq:IIIa0}
  I_{0}=\int
  \frac{\lambda}{\rho_{+}}
  e^{i \rho_{+}A}
  \chi_{M}d \lambda.
\end{equation}
We notice that for $-1<\lambda<1$ the function 
$\rho_{+}(\lambda)=i(1-\lambda^{2})^{1/2}$ is even and so is
$\chi_{M}(\lambda)$, hence the part of the integral on
$[-1,1]$ vanishes. Performing the change of variables
\begin{equation*}
  \mu=\rho_{+}(\lambda)\quad \implies \quad
  \lambda=\sgn\mu\cdot\mmu, \qquad
  \frac{\lambda}{\rho_{+}}d \lambda=d\mu
\end{equation*}
in the remaining parts we get
\begin{equation}\label{eq:II00}
  I_{0}=
  \int_{-\infty}^{-1}+\int_{1}^{+\infty}=
  \int_{-\infty}^{+\infty}
  e^{i \mu A}
  \chi_{M}(\mmu)d \lambda
\end{equation}
since $\chi$ is an even function. Writing 
$\chi(\mu)=\widetilde{\chi }(\mu^{2})$, 
$\widetilde{\chi }\in C^{\infty}_{c}$, we have then
\begin{equation*}
  \|I_{0}\|_{L^{1}_{A}}=
  \left\|
  \int
  e^{i \mu A}
  \widetilde{\chi }(\mu^{2}+M^{-2})d\mu
  \right\|_{L^{1}_{A}}\le
  \pi 
  \int
  \left|
  (1-\partial^{2}_{\mu})
  \widetilde{\chi }(\mu^{2}+M^{-2})
  \right|d\mu
\end{equation*}
which is bounded independently of $M>1$.

The second term $v(t,A)$ can be estimated directly using a
stationary phase argument. For the region $\lambda>1$ we change
variable as above
\begin{equation*}
  \int_{1}^{+\infty} \frac{\lambda}{\rho_{+}}
  \frac{\rho_{+}-\lambda}{\rho_{+}+\lambda}
  e^{i \rho_{+}A}e^{it \lambda^{2}}
  \chi_{M}d \lambda=
  e^{it}
  \int_{0}^{+\infty}
  \frac{\mu-\mmu}{\mu+\mmu}e^{i(\mu^{2}+A/t)t}\chi_{M}(\mmu)d\mu
\end{equation*}
and we can apply the strong form of van der Corput \eqref{eq:van2}
since the phase staisfies
\begin{equation*}
  \phi(\mu)=\mu^{2}+\frac At \quad\implies\quad
  \phi''\ge2
\end{equation*}
while the amplitude
\begin{equation*}
  \frac{\mu-\mmu}{\mu+\mmu}\chi_{M}(\mmu)=
  (2\mu\mmu-1-2\mu^{2})\chi_{M}(\mmu)
\end{equation*}
is bounded by 1 and makes a finite number of oscillations
on $(1,+\infty)$. Thus this part of $v(t,A)$ decays like
$|t|^{-1/2}$, uniformly in $M,A$.
An analogous argument gives the same bound for the
region $\lambda<-1$. For remaining part of the integral
on $-1<\lambda<1$, noticing that the cutoff is equal to 1
there, we split further into the piece
\begin{equation*}
  \int_{0}^{+1} \frac{\lambda}{\rho_{+}}
  \frac{\rho_{+}-\lambda}{\rho_{+}+\lambda}
  e^{i \rho_{+}A}e^{it \lambda^{2}}d\lambda
\end{equation*}
and a symmetric one for $-1<\lambda<0$ which is estimated in
an identical way. Changing variable as $\lambda=\sqrt{1-\mu^{2}}$ 
the integral becomes
\begin{equation*}
  -ie^{it}\int_{0}^{1}
  \frac{\mu-i\sqrt{1-\mu^{2}}}{\mu+i\sqrt{1-\mu^{2}}}
  e^{-A \sqrt{1-\mu^{2}}}e^{-it \mu^{2}}d\mu
\end{equation*}
and we can apply again \eqref{eq:van2} choosing as phase
$\phi=-\mu^{2}$ and as amplitude
\begin{equation*}
  \frac{\mu-i\sqrt{1-\mu^{2}}}{\mu+i\sqrt{1-\mu^{2}}}
  e^{-A \sqrt{1-\mu^{2}}}=
  (2\mu^{2}-1-2i\mu \sqrt{1-\mu^{2}})e^{-A \sqrt{1-\mu^{2}}}.
\end{equation*}
Notice indeed that the amplitude is bounded since $A>0$,
and both its real and imaginary part make a finite number
of oscillations on $(0,1)$ independent of $A$.
Thus also the last piece decays as $|t|^{-1/2}$
and the proof is concluded.



\section{General step potentials}\label{sec:general_step_pot}  

Consider now an $L^{1}$ perturbation of $H_{0}$ of the form
($\one_{+}=\one_{[0,+\infty)}$)
\begin{equation*}
  H=-\frac{d^{2}}{dx^{2}}+\onep(x)+V(x),\qquad
  V\in L^{1}(\mathbb{R})\quad\text{real valued,}
  \qquad
  D(H)=H^{2}(\mathbb{R}).
\end{equation*}
By the standard theory (see e.g. 
\cite{Weidmann80-a},
\cite{Weidmann87-a},
\cite{Teschl09-a})
$H$ is a selfadjoint operator, and its spectrum decomposes as
\begin{equation*}
  \sigma(H)=\sigma_{p}(H)\cup \sigma_{ac}(H),\qquad
  \sigma_{ac}(H)=[0,+\infty),\qquad
  \sigma_{p}=\{\lambda_{j}\}_{j\ge1}\subset(-\infty,0].
\end{equation*}
The sequence of negative eigenvalues, if infinite, can accumulate only
at 0.

\subsection{Low frequencies}\label{sub:low_frequencies}  

From now on we make the stronger assumption on $V$
\begin{equation}\label{eq:VL11}
  (1+x^{2})\cdot V(x)\in L^{1}(\mathbb{R})
\end{equation}
(although for several results  the weaker condition
$(1+|x|)V\in L^{1}$ would be sufficient).
Then most of the standard theory of Jost solutions (see
\cite{DeiftTrubowitz79-a}) carries through to the case of
step potentials, as proved in \cite{CohenKappeler85-a}. We recall
the essential
facts that we shall need in the following. 

Consider the resolvent equation on $\mathbb{R}$
\begin{equation}\label{eq:reseq}
  -\frac{d^{2}}{dx^{2}} f(z,x)+(\onep+V)f(z,x)
  =z^{2}f(z,x),\qquad
  z\in \mathbb{C},\quad \Im z\ge0.
\end{equation}
In order to describe the Jost solutions we extend the definition
of the function $\rho_{+}(z)$ to the upper half plane
$\{\Im z\ge0\}$ as
\begin{equation}\label{eq:rhogen}
  \rho_{+}(z)=\ \text{the branch
  of $(z^{2}-1)^{1/2}$ with nonnegative imaginary part.}
\end{equation}
Notice that for real $z$ this reduces precisely to
\eqref{eq:rho}. The function $\rho_{+}(z)$ is
continuous on $\{\Im z\ge0\}$ and analytic on $\{\Im z>0\}$,
and is a bijection of $\{\Im z>0\}$ onto the upper half plane
with a slit
\begin{equation*}
  \Omega=\{\Im z>0\} \setminus S,\qquad
      S=\{z\in \mathbb{C} \colon
         \Re z=0,\ 0\le \Im z\le1\}
\end{equation*}
with a jump across the slit $S$. Then, under assumption \eqref{eq:VL11},
for each $z$ with $\Im z\ge0$,
equation \eqref{eq:reseq} has two solutions $f_{\pm}(z,x)$
uniquely determined by the properties
\begin{equation}\label{eq:jostprop}
  e^{-i \rho_{+}(z)x}f_{+}(z,x)\to 1 \quad\text{and}\quad
  e^{-i \rho_{+}(z)x}f'_{+}(z,x)\to i \rho_{+}(z)\quad
  \ \text{as $x\to+\infty$},
\end{equation}
\begin{equation}\label{eq:jostprop2}
  e^{i z x}f_{-}(z,x)\to 1 \quad\text{and}\quad
  e^{i z x}f'_{-}(z,x)\to -i z\quad
  \ \text{as $x\to-\infty$}.
\end{equation}
Such solutions are called the \emph{Jost solutions} of the resolvent
equation \eqref{eq:reseq}.
Their Wronskian
\begin{equation}\label{eq:wronskian}
  W(z)=W[f_{+},f_{-}]=
  f_{+}(z,0)\partial_{x}f_{-}(z,0)-
    f_{-}(z,0)\partial_{x}f_{+}(z,0)
\end{equation}
is continuous on $\{\Im z\ge0\}$, analytic 
on $\{\Im z>0\}$,
and satisfies the fundamental property
\begin{equation}\label{eq:Wneq0}
  W(\lambda)\neq0 \quad\text{for $0\neq \lambda\in \mathbb{R}$.}
\end{equation}
According to the standard terminology,
when the Jost solutions are independent at $\lambda=0$ i.e.~when
$W(0)\neq0$, the potential $\one_{+}+V$ 
is said to be of \emph{generic type},
while in the case $W(0)=0$ it is said to be of \emph{exceptional type}.

\begin{theorem}[Lemma 2.4 in \cite{CohenKappeler85-a}]\label{the:Wlambda}
  Assume the potential $V(x)$ satisfies
  \begin{equation}\label{eq:VL12}
    \xx^{2} V(x)\in L^{1}(\mathbb{R}).
  \end{equation}
  Then the Wronskian $W(z)=[f_{+}(z,x),f_{-}(z,x)]$
  is continuous for $\Im z\ge0$, analytic for $\Im z>0$,
  and different from zero for all $z\in \mathbb{R}\setminus0$.
  Moreover, 
  \renewcommand{\labelenumi}{\textit{(\roman{enumi})}}
  \begin{enumerate}
    \item either $W(0)\neq0$,
    \item or $W(0)=0$ and for some real $\gamma\neq0$
    \begin{equation}\label{eq:Wz}
      \dot W(0)=\lim_{z\to0\atop\Im z\ge0}
      \frac{W(z)}{z}=
      i \gamma.
    \end{equation}
  \end{enumerate}
\end{theorem}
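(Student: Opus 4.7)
The plan is to adapt the Deift--Trubowitz theory of Jost solutions to the step-potential setting, taking as reference the Jost solutions of the unperturbed step equation $-u'' + \onep u = z^2 u$. These unperturbed solutions are explicit piecewise exponentials, continuously matched in value and first derivative at $x=0$. The full Jost functions then satisfy Volterra integral equations of the form
\[
f_+(z,x) = u_+^{(0)}(z,x) + \int_x^{+\infty} G_+(z;x,y)\,V(y)\,f_+(z,y)\,dy,
\]
and symmetrically for $f_-$, whose kernels $G_\pm$ are built from $u_+^{(0)}$ and a partner solution. Setting $m_+(z,x) := e^{-i\rho_+(z)x} f_+(z,x)$ (and analogously for $m_-$), iterating, and using that $(1+x^2)V\in L^1$ gives uniform control on the Neumann series and on its $z$-derivative over compact subsets of the closed upper half-plane. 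This yields continuity of $f_\pm$, and hence of $W$, up to the real axis, together with analyticity in $\{\Im z>0\}$. The symmetry $W(-\overline z)=\overline{W(z)}$, coming from the reality of $V$ and from $\rho_+(-\overline z)=-\overline{\rho_+(z)}$, will be used below.

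For the non-vanishing of $W$ on $\mathbb{R}\setminus\{0\}$ I would split by regime. When $|\lambda|>1$, $\rho_+(\lambda)$ is real and both $f_\pm$ oscillate at both infinities; writing $f_+(\lambda,x) = \alpha e^{-i\lambda x} + \beta e^{i\lambda x} + o(1)$ as $x\to-\infty$ and computing $W[f_+,f_-]$ there gives $W(\lambda) = -2i\lambda\beta$. Equating $W[f_+,\overline{f_+}]$ at $+\infty$ (where it equals $-2i\rho_+(\lambda)$) with its value $2i\lambda(|\alpha|^2-|\beta|^2)$ at $-\infty$ yields the flux identity $|\beta|^2-|\alpha|^2 = \rho_+(\lambda)/\lambda$, which is strictly positive for $\lambda>1$ and for $\lambda<-1$ alike, so $\beta\ne 0$ and $W(\lambda)\ne 0$. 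When $0<|\lambda|\le 1$, by contrast, $\rho_+(\lambda)$ is purely imaginary, so the asymptotic condition at $+\infty$ defining $f_+$ becomes a real condition; combined with the reality of the ODE this forces $f_+(\lambda,\cdot)$ to be real-valued on all of $\mathbb{R}$. But $f_-(\lambda,\cdot)\sim e^{-i\lambda x}$ at $-\infty$ is genuinely complex and independent of $\overline{f_-(\lambda,\cdot)}$; a proportionality $f_+=cf_-$ implied by $W(\lambda)=0$ would then force $c=0$ and so $f_+\equiv 0$, a contradiction.

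At $\lambda=0$: if $W(0)\ne 0$ we are in the generic case, so assume $W(0)=0$. The real-axis restriction of $W(-\overline z)=\overline{W(z)}$ reads $W(-\lambda)=\overline{W(\lambda)}$, and Taylor expansion at $0$ forces $\dot W(0)+\overline{\dot W(0)}=0$, i.e.\ $\dot W(0)=i\gamma$ with $\gamma\in\mathbb{R}$. To obtain an explicit formula for $\gamma$, I would differentiate the Volterra equations in $z$ at $z=0$ to produce inhomogeneous equations for $\dot f_\pm(0,\cdot)$, and then rewrite $\dot W(0)$ via a Green-type identity as a combination of $f_\pm(0,\cdot)$ and $\dot f_\pm(0,\cdot)$ evaluated at $x=0$, plus a volume integral involving $V$ and products $f_+(0,\cdot)f_-(0,\cdot)$. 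In the exceptional case, $f_+(0,\cdot)=c_0 f_-(0,\cdot)$ for a real constant $c_0\ne 0$, and this collapses $\dot W(0)$ to a closed expression depending only on the single bounded zero-energy solution.

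The delicate point, and the main obstacle, is showing $\gamma\ne 0$. The key asymptotics are $f_+(0,x)\sim e^{-x}$ as $x\to+\infty$ (from $\rho_+(0)=i$) and $f_-(0,x)\to 1$ as $x\to-\infty$; in the exceptional case the bounded zero-energy solution $f_+(0,\cdot)=c_0 f_-(0,\cdot)$ decays exponentially at $+\infty$ and tends to $c_0\ne 0$ at $-\infty$. The exponential decay at $+\infty$ kills the boundary contribution there in the Green identity, so the value of $\gamma$ is entirely determined by the $-\infty$ boundary term and the $V$-integral. This asymmetry between the $+\infty$ and $-\infty$ behaviour---a direct consequence of the step $V_0$---is what ultimately yields $\gamma\ne 0$; it is the non-resonance content of the statement and is the portion of the argument rescued from the Cohen--Kappeler analysis.
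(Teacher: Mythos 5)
First, a point of reference: the paper does not prove this statement at all --- it is imported verbatim as Lemma~2.4 of \cite{CohenKappeler85-a}, so your attempt can only be measured against the cited literature, not against an internal argument. That said, most of your outline is sound and is the standard route: the Volterra/Neumann-series setup for $m_{\pm}$ relative to the explicit step Jost solutions gives continuity up to $\Im z=0$ and analyticity for $\Im z>0$; the flux identity $|\beta|^{2}-|\alpha|^{2}=\rho_{+}(\lambda)/\lambda>0$ correctly handles $|\lambda|>1$ (including $\lambda<-1$, where both $\rho_{+}$ and $\lambda$ are negative); the real-valuedness of $f_{+}(\lambda,\cdot)$ for $0<|\lambda|\le1$ combined with $W[f_{-},\overline{f_{-}}]=2i\lambda\neq0$ correctly handles the spectral gap; and the symmetry $W(-\lambda)=\overline{W(\lambda)}$ correctly forces $\dot W(0)\in i\mathbb{R}$.

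The genuine gap is exactly where you flag it: the claim $\gamma\neq0$ is asserted from a qualitative ``asymmetry'' between the two ends, but never derived, and the theorem has no content in the exceptional case without it. The computation that closes it is short but must actually be carried out. Differentiating $-f_{+}''+(\one_{+}+V)f_{+}=z^{2}f_{+}$ in $z$ at $z=0$ shows that $\dot f_{+}(0,\cdot)$ again solves the zero-energy equation; since $\dot\rho_{+}(0)=0$ (the branch points of $\rho_{+}$ are at $z=\pm1$, not at $0$), one has $\dot f_{+}(0,x)=e^{-x}\partial_{z}m_{+}(0,x)=o(e^{-x})$ at $+\infty$, so $\dot f_{+}(0,\cdot)$ is proportional to $f_{+}(0,\cdot)$ and $W[\dot f_{+}(0,\cdot),f_{-}(0,\cdot)]$ is a multiple of $W(0)=0$. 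Hence $\dot W(0)=W[f_{+}(0,\cdot),\dot f_{-}(0,\cdot)]$, and evaluating this constant Wronskian as $x\to-\infty$ using $f_{+}(0,\cdot)=c_{0}f_{-}(0,\cdot)\to c_{0}$, $\partial_{x}f_{+}(0,x)=o(1/|x|)$ (this is where the moment condition on $V$ enters), $\dot f_{-}(0,x)\sim-ix$ and $\partial_{x}\dot f_{-}(0,x)\to-i$ gives $\dot W(0)=-ic_{0}$, i.e.\ $\gamma=-c_{0}$, which is real and nonzero because $c_{0}$ is the real, nonzero constant of proportionality between the two real zero-energy Jost solutions. Without this step (or an equivalent transmission-coefficient identity) your proposal establishes only that $\dot W(0)$ is purely imaginary, not that it is nonzero.
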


Now consider the resolvent equation $(H-z^{2})u=h$ for $h\in L^{2}$
and $z^{2}\not\in \mathbb{R}^{+}$ i.e.~$\Im z>0$. The solution
$u=R(z^{2})h\in L^{2}$ can be expressed by standard ODE theory
using the method of variation of constans, via the kernel
\begin{equation}\label{eq:Kla}
  \mathcal{K}_{z}(x,y)=
  \begin{cases}\displaystyle
    \frac{f_{+}(z,x)f_{-}(z,y)}{W(z)}
     &\text{for $y<x$,}\\
    \mathcal{K}_{z}(y,x)
     &\text{for $y>x$}
  \end{cases}
\end{equation}
as
\begin{equation*}
  R(z^{2})h=(H-z^{2})^{-1}h=\int_{\mathbb{R}}
  \mathcal{K}_{z}(x,y)h(y)dy,
  \qquad
  \Im z>0.
\end{equation*}
Notice that the continuity of $f_{\pm}$ as $z$ approaches the real
axis from positive imaginary values implies that the limit operators
$R(\lambda^{2}+i0)$, $\lambda^{2}\ge0$, with kernel $K_{\lambda}$
given by \eqref{eq:Kla} with $z=\lambda$, are well defined
as operators between suitable weighted $L^{2}$ spaces (i.e.,
the limiting absorption principle holds for $H$). Notice also
that the limits from negative imaginary values are 
given by
\begin{equation*}
  R(\lambda^{2}-i0)=\overline{R(\lambda^{2}+i0)}\quad\text{with
  kernel\ \  $\overline{K_{\lambda}(x,y)}=K_{-\lambda}(x,y)$.}
\end{equation*}
Hence, by the spectral theorem,
fixed any cutoff function 
$\chi(\sqrt{s})\in C^{\infty}_{c}(\mathbb{R}^{+})$
equal to 1 in a neighbourhood of 0,
we can represent the low frequency part of the solution
$e^{itH}f$ as
\begin{equation}\label{eq:lowf}
  P_{ac}e^{itH}\chi(H)g=C
  \int_{-\infty}^{+\infty}
  \int
  e^{it \lambda^{2}}\lambda\chi(\lambda)
    K_{\lambda}(x,y)g(y)dy
      d \lambda.
\end{equation}
Here we have used a change of variables $\lambda\to \lambda^{2}$
in order to express the solution as an integral on the
whole real line. The projection $P_{ac}$
on the absolutely continuous subspace of $H$ is necessary in view
of the possible existence of (negative) eigenvalues.
The precise choice of the cutoff $\chi$ will be made 
later when studying the high frequency
case in Section \ref{sub:high_frequencies}.

In view of \eqref{eq:Kla}, we have the formula
\begin{equation}\label{eq:repreitH}
  P_{ac}e^{itH}\chi(H)g=I+II
\end{equation}
where, apart from inessential constants,
\begin{equation}\label{eq:repreitHI}
  I=
  \int_{y<x}\int_{-\infty}^{+\infty}
  e^{it \lambda^{2}}
  \frac{f_{+}(\lambda,x)f_{-}(\lambda,y)}{W(\lambda)}
  g(y)\lambda \chi(\lambda)d \lambda dy
\end{equation}
and
\begin{equation}\label{eq:repreitHII}
  II=
  \int_{y>x}\int_{-\infty}^{+\infty}
  e^{it \lambda^{2}}
  \frac{f_{+}(\lambda,y)f_{-}(\lambda,x)}{W(\lambda)}
  g(y)\lambda \chi(\lambda)d \lambda dy.
\end{equation}
It is not restrictive to assume that $t>0$ since 
the estimate for $t<0$ can be deduced by conjugation.
Moreover, the two pieces $I$ and $II$ can be handled in a completely
analogous way
so in the following we shall focus on the first term $I$ only,
with $y<x$.

We introduce the standard normalization
\begin{equation}\label{eq:modjost}
  m_{+}(\lambda,x)=e^{-i \rho_{+}(\lambda)x}f_{+}(\lambda,x),\qquad
  m_{-}(\lambda,x)=e^{i \lambda x}f_{-}(\lambda,x)
\end{equation}
so that $m_{\pm}\to 1$ as $\pm x\to+\infty$. 
The functions $m_{\pm}$ are usually called the \emph{Faddeev} solutions.
Notice that the
equations satisfied by $m_{\pm}$ are respectively
($\one_{-}=1-\one_{+}$)
\begin{equation}\label{eq:eqmpm}
  m_{+}''+2i \rho_{+}(\lambda)m_{+}'=(V-\one_{-})m_{+},\qquad
  m_{-}''-2i \lambda m_{-}'=(V+\one_{+})m_{-}.
\end{equation}
Thus our goal is a decay estimate for the integral
\begin{equation}\label{eq:Imod}
  I=
  \int_{y<x}\int_{-\infty}^{+\infty}
  e^{it \lambda^{2}}e^{i \rho_{+}(\lambda)x}e^{-i \lambda y}
  \frac{m_{+}(\lambda,x)m_{-}(\lambda,y)}{W(\lambda)}
  \lambda \chi(\lambda)d \lambda g(y)dy
\end{equation}
and in view of \eqref{eq:dispstepgen}, it is sufficient to prove that
for all $t>0$
\begin{equation}\label{eq:goal}
  \left|
    \int_{-\infty}^{+\infty}
    e^{it \lambda^{2}}e^{i \rho_{+}(\lambda)x}e^{-i \lambda y}
    \frac{m_{+}(\lambda,x)m_{-}(\lambda,y)}{W(\lambda)}
    \lambda \chi(\lambda)d \lambda
  \right|\lesssim t ^{-1/2}
\end{equation}
uniformly in $x,y\in \mathbb{R}$ with $y<x$.

\begin{theorem}\label{the:mpm}
  Assume the potential $V(x)$ satisfies \eqref{eq:VL12}, 
  and let $m_{\pm}$
  the Faddeev solutions defined in \eqref{eq:modjost}.
  Then there exist a constant $C_{V}$ and a
  continuous increasing function 
  $\phi_{V}(x):\mathbb{R}^{+}\to \mathbb{R}^{+}$ such that the following
  holds:
  \renewcommand{\labelenumi}{\textit{(\roman{enumi})}}
  \begin{enumerate}
    \item $m_{-}(\lambda,x)$ is of class $C^{1}$ on 
    $(\lambda,x)\in\mathbb{R}^{2}$
    and satisfies the estimates
    \begin{equation}\label{eq:mmneg}
      |m_{-}(\lambda,x)|+|\partial_{\lambda}m_{-}(\lambda,x)|\le C_{V}
      \quad\text{for $x\le0$,}
    \end{equation}
    \begin{equation}\label{eq:mmpos}
      |m_{-}(\lambda,x)|+|\partial_{\lambda}m_{-}(\lambda,x)|\le 
      \phi_{V}(x)
      \quad\text{for $x\ge0$;}
    \end{equation}
    \item $m_{+}(\lambda,x)$ is continuous om
    $(\lambda,x)\in\mathbb{R}^{2}$, of
    class $C^{1}$ for $\lambda\neq\pm1$, and satisfies the estimates
    \begin{equation}\label{eq:mppos}
      |m_{+}(\lambda,x)|\le C_{V},\qquad
      |\partial_{\lambda}m_{+}(\lambda,x)|\le 
        \frac{|\lambda|C_{V}}{|1-\lambda^{2}|^{1/2}}
      \quad\text{for $x\ge0$,}
    \end{equation}
    \begin{equation}\label{eq:mpneg}
      |m_{+}(\lambda,x)|\le \phi_{V}(x),\qquad
      |\partial_{\lambda}m_{+}(\lambda,x)|\le 
        \frac{|\lambda|\phi_{V}(x)}{|1-\lambda^{2}|^{1/2}}
      \quad\text{for $x\le0$.}
    \end{equation}
    \item 
    More precisely,
    there exists $K\ge0$ such that, for all 
    $\lambda\in \mathbb{R}$
    \begin{equation}\label{eq:asymptm1}
      1-K \sigma_{V}(x)\le 
      m_{+}(\lambda,x)\le 
      1+K\sigma_{V}(x)\qquad
      \ \text{for $x\ge0$}
    \end{equation}
    and
    \begin{equation}\label{eq:asymptm2}
      -K\sigma_{V}(x)\le 
      \partial_{x} m_{+}(\lambda,x)\le 
      K\sigma_{V}(x)\qquad
      \ \text{for all $x\in \mathbb{R}$,}
    \end{equation}
    where
    \begin{equation*}
      \sigma_{V}(x)=\int_{x}^{\infty}(1+|y|)|V(y)|dy.
    \end{equation*}
    The same estimates hold for $m_{-}$, $\partial_{x}m_{-}$
    for all $\lambda\in \mathbb{R}$ and $x\le0$, $x\in \mathbb{R}$
    respectively.
  \end{enumerate}
\end{theorem}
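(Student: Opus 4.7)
The plan is to recast \eqref{eq:eqmpm} as Volterra integral equations via variation of parameters against the normalizations \eqref{eq:jostprop}--\eqref{eq:jostprop2}, and then iterate. Since the constant function $1$ solves the homogeneous equations, one obtains
\begin{equation*}
  m_-(\lambda,x)=1+\int_{-\infty}^{x}D_-(\lambda,x-s)\bigl[V(s)+\onep(s)\bigr]m_-(\lambda,s)\,ds,
\end{equation*}
\begin{equation*}
  m_+(\lambda,x)=1+\int_{x}^{\infty}D_+(\lambda,s-x)\bigl[V(s)-\onem(s)\bigr]m_+(\lambda,s)\,ds,
\end{equation*}
where the Volterra kernels are
\begin{equation*}
  D_-(\lambda,u)=\frac{e^{2i\lambda u}-1}{2i\lambda},\qquad
  D_+(\lambda,u)=\frac{e^{2i\rho_{+}(\lambda)u}-1}{2i\rho_{+}(\lambda)}.
\end{equation*}

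First I would record the pointwise bounds $|D_\pm(\lambda,u)|\le u$ for real $\lambda$ and $u\ge 0$ (using $|e^{2i\eta u}|=1$ if $\eta\in\mathbb{R}$, or $|1-e^{-au}|\le au$ if $\eta=i\sqrt{1-\lambda^{2}}$), together with
\begin{equation*}
  |\partial_\lambda D_-(\lambda,u)|\lesssim u^{2},\qquad
  |\partial_\lambda D_+(\lambda,u)|\lesssim u^{2}\,\frac{|\lambda|}{|1-\lambda^{2}|^{1/2}},
\end{equation*}
the second estimate reflecting $\rho_+'(\lambda)=\lambda/\rho_+(\lambda)$. In the \emph{favourable} regimes, namely $m_-$ with $x\le 0$ and $m_+$ with $x\ge 0$, the step in the integrand vanishes on the relevant halfline, reducing the problem to a classical Deift--Trubowitz Volterra equation. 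Neumann iteration converges because $\int(1+|s|)|V(s)|\,ds<\infty$, yielding the uniform bounds \eqref{eq:mmneg} and \eqref{eq:mppos}; one more $\lambda$-differentiation with the same iteration, now leaning on $(1+x^{2})V\in L^{1}$, produces the derivative bounds and transmits the factor $|\lambda|/|1-\lambda^{2}|^{1/2}$ into $\partial_\lambda m_+$.

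For the unfavourable regimes I would split the Volterra equation at the origin. For $m_-$ on $x\ge 0$, the tail $\int_{-\infty}^{0}$ reproduces the favourable case and yields a bound of size $O(1+x)$ by using $|D_-(\lambda,x-s)|\le x-s\le x+|s|$ and the weight $(1+|s|)|V(s)|$; on $[0,x]$ the step contributes a constant potential alongside $V$, and Gronwall's inequality applied to the resulting Volterra inequality yields a bound by a continuous increasing function $\phi_V(x)$ independent of $\lambda$. The mirror-image argument handles $m_+$ on $x\le 0$, giving \eqref{eq:mpneg}.

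Finally, for part (iii) I would iterate the $m_+$ equation on $x\ge 0$ keeping the constants explicit: since $\onem=0$ there, the first iterate satisfies
\begin{equation*}
  |m_+(\lambda,x)-1|\le \Bigl(\sup_{s\ge x}|m_+(\lambda,s)|\Bigr)\int_{x}^{\infty}(s-x)|V(s)|\,ds\le K\,\sigma_{V}(x),
\end{equation*}
giving \eqref{eq:asymptm1} once the $L^{\infty}$ bound from (ii) is absorbed into $K$; differentiating the integral equation in $x$ produces $m_+'(\lambda,x)=-\int_{x}^{\infty}e^{2i\rho_{+}(s-x)}V(s)m_+(\lambda,s)\,ds$, whence \eqref{eq:asymptm2}, and $m_-$ is symmetric. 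The main obstacle I expect is the passage through the thresholds $\lambda=\pm 1$ in the bound for $\partial_\lambda m_+$: the iteration has to be arranged so that the non-integrable factor $|\lambda|/|1-\lambda^{2}|^{1/2}$ is absorbed uniformly, and this is precisely where the second-moment assumption $(1+x^{2})V\in L^{1}$ is essential.
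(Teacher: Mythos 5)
Your proposal is correct, and it reaches the same estimates by a genuinely different (more self-contained) route. The paper does not run the Volterra iteration itself: it truncates the modified potential (setting $V_{M}=V+\onep$ for $x\le M$, $0$ beyond, and symmetrically $V_{N}=V-\onem$ for $x\ge N$) so that the resulting potentials lie in the classical weighted $L^{1}$ class, identifies $m_{-}$ and $m_{+}$ with the classical Faddeev functions $n_{-}(\lambda,x)$ and $n_{+}(\kappa,x)$ of those truncated problems (with the substitution $\kappa=\rho_{+}(\lambda)$ for $m_{+}$), and then simply quotes Lemma~1 of Deift--Trubowitz and Lemmas~3.5--3.6 of Artbazar--Yajima; the factor $|\lambda|/|1-\lambda^{2}|^{1/2}$ in \eqref{eq:mppos}--\eqref{eq:mpneg} is then just the Jacobian $d\kappa/d\lambda$, the function $\phi_{V}$ arises from the $M$-dependence of the cited constants, and part (iii) is read off from the explicit inequalities (ii)--(iii) of the Deift--Trubowitz lemma at $z=\rho_{+}(\lambda)$. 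You instead rederive those classical lemmas from scratch by writing the Volterra equations for $m_{\pm}$ with kernels $D_{\pm}$ and iterating; this is exactly the machinery hidden inside the cited lemmas, so the two arguments are mathematically equivalent, but yours makes the mechanism visible (in particular it shows transparently why the second moment of $V$ is what controls $\partial_{\lambda}m_{+}$ uniformly through $\lambda=\pm1$, via $|\partial_{\lambda}D_{+}|\lesssim u^{2}|\lambda|/|1-\lambda^{2}|^{1/2}$ entering only the inhomogeneous term of the differentiated Volterra equation), at the cost of redoing work the paper outsources. One small point to make explicit if you write this up: your formula $m_{+}'(\lambda,x)=-\int_{x}^{\infty}e^{2i\rho_{+}(s-x)}V(s)m_{+}(\lambda,s)\,ds$ and hence the bound \eqref{eq:asymptm2} by $\sigma_{V}(x)$ only hold for $x\ge0$ (for $x<0$ the term $-\onem$ contributes); the paper has the same restriction in its proof (it takes $x\ge0$ there despite the statement saying ``for all $x\in\mathbb{R}$''), and only the $x\ge0$ case is used later, so this is a shared cosmetic issue rather than a gap in your argument.
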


\begin{proof}
  The proof is a reduction to the standard theory for integrable
  potential, as follows.
  For a fixed $M>0$, define the modified potential $V_{M}$ as
  $V_{M}\equiv V+\one_{+}$ for $x\le M$, 
  $V_{M}=0$ for $x>M$; then we have 
  $\xx^{2}V_{M}(x)\in L^{1}(\mathbb{R})$ and the standard theory
  applies. In particular, denoting by $g_{-}(\lambda,x)$ the 
  Jost solution of
  \begin{equation*}
    -g''+V_{M}g=\lambda^{2}g
  \end{equation*}
  with $e^{i \lambda x}g_{-}\to1$ as $x\to-\infty$ and writing
  $n_{-}(\lambda,x)=e^{i \lambda x}g_{-}(\lambda,x)$, we know by Lemma 1 in
  \cite{DeiftTrubowitz79-a} and Lemmas 3.5-3.6 of
  \cite{ArtbazarYajima00} that $n_{-}(\lambda,x)$ is of class
  $C^{1}$ on $\mathbb{R}^{2}$, that $n_{-}$ and $\partial_{\lambda}n_{-}$
  are bounded for $x\le0$, while for $x>0$
  \begin{equation*}
    |n_{-}(\lambda,x)|\le C\xx,\qquad
    |\partial_{\lambda}n_{-}(\lambda,x)|\le C\xx^{2},
  \end{equation*}
  where the constant $C$ depends on the $L^{1}$ norm of $\xx^{2}V_{M}$
  and hence is an increasing function of $M$ only. Since our Jost
  solutions $m_{-}(\lambda,x)$ coincide with $n_{-}(\lambda,x)$
  for $x\le M$, we deduce part (i) immediately. By analysing the 
  proof in \cite{DeiftTrubowitz79-a} one can check that the function
  $\phi_{V}(x)$ grows exponentially, but we shall not need this.
  
  The study of $m_{+}$ is only slightly more difficult. We proceed
  in a similar way: for a fixed $N<0$, we define
  $V_{N}(x)\equiv V(x)-\one_{-}$ for $x\ge N$ (where $\one_{-}$
  is the characteristic function of $\mathbb{R}^{-}$),
  $V_{N}\equiv0$ for $x<N$, and we consider the equation
  \begin{equation*}
    -g''+V_{N}(x)g=(\lambda^{2}-1)g \equiv \rho_{+}(\lambda)^{2}g.
  \end{equation*}
  Notice that this equation coicides with our equation for $f$ in
  the region $x\ge N$, and $\xx^{2}V_{N}\in L^{1}$. As above we
  can apply the standard theory and consider the Jost solution
  $g_{+}(\kappa,x)$, $\kappa=\rho_{+}(\lambda)$,
  uniquely determined by the condition
  $e^{-i \kappa x}g_{+}(\kappa,x)\to1$ as $x\to+\infty$;
  notice however that we must use also complex values of $\kappa$
  since $\rho_{+}(\lambda)$ is pure imaginary for $|\lambda|<1$.
  Thus we define $n_{+}(\kappa,x)=e^{-i \kappa x}g_{+}(\kappa,x)$
  and our Jost solution $m_{+}(\lambda,x)$ satisfies
  \begin{equation*}
    m_{+}(\lambda,x)=n_{+}(\rho_{+}(\lambda),x)
    \quad\text{for $x\ge N$.}
  \end{equation*}
  From the above mentioned Lemmas we 
  easily deduce part (ii) of the Theorem.
  Notice in particular that the boundedness of the first derivative
  in \eqref{eq:mpneg} does not follow directly by the statement
  in Lemma 1 of \cite{DeiftTrubowitz79-a}, which only states
  a quadratic growth, but by an examination of the proof (see
  in particular the last formula on page 135; see also
  \cite{ArtbazarYajima00}).
  
  To prove the final statements \eqref{eq:asymptm1} and
  \eqref{eq:asymptm2}, it is sufficient
  to recall estimates (ii)-(iii) in Lemma 1 of \cite{DeiftTrubowitz79-a}
  concerning the classical Jost function $n_{+}(z,x)$:
  \begin{equation*}
    |n_{+}(z,x)-1|\le K
    \frac{[1+\max(-x,0)]\cdot\int_{x}^{\infty}(1+|y|)|V(y)|dy}{1+|z|}
  \end{equation*}
  and
  \begin{equation*}
    |\partial_{x} n_{+}(z,x)|\le K
    \frac{\int_{x}^{\infty}(1+|y|)|V(y)|dy}{1+|z|}
  \end{equation*}
  which are valid for all $\Im z\ge0$ and $x\in \mathbb{R}$.
  Taking $z=\rho_{+}(\lambda)$ and $x\ge0$ we conclude the proof.
\end{proof}

As above, Van der Corput estimates \eqref{eq:van} will play
an essential role in the following. We collect in a Lemma
some applications that we shall need recurrently:

\begin{lemma}\label{lem:vdcadv}
  Let $a,b,A,B\in \mathbb{R}$ and $h\in C^{1}(a,b)$. Then for
  all $t>0$ the following estimate holds
  \begin{equation}\label{eq:vdcadv}
    \left|
      \int_{a}^{b}
       e^{it \lambda^{2}}e^{i \rho_{+}(\lambda)A}e^{i \lambda B}
       h(\lambda)d \lambda
    \right|\le 30 
    \Bigl[\|h\|_{L^{\infty}(a,b)}+\|h'\|_{L^{1}(a,b)}\Bigr]
    \cdot t^{-1/2}
  \end{equation}
  provided one of the following set of conditions is satisfied:
  \renewcommand{\labelenumi}{\textit{(\roman{enumi})}}
  \begin{enumerate}
    \item $A\le0$, $B\in \mathbb{R}$ and $1\le a<b$; or
    \item $A\in \mathbb{R}$, $B\ge0$ and $1\le a<b$; or
    \item $A\ge0$, $B\in \mathbb{R}$ and $a<b\le1$; or
    \item $A\in \mathbb{R}$, $B\le0$ and $a<b\le-1$.
  \end{enumerate}
\end{lemma}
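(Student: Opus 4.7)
The plan is to reduce each case to the van der Corput bound \eqref{eq:van}, applied to the phase $t\phi(\lambda)=t\lambda^{2}+A\rho_{+}(\lambda)+B\lambda$ (or to a transformed phase after a change of variable), with $h$ playing the role of the amplitude $\psi$. The pointwise bound \eqref{eq:vdcadv} will then follow provided $|\phi''|\ge 2$ on the interval of integration and the effective amplitude has sup norm and total variation controlled by $\|h\|_{L^\infty}+\|h'\|_{L^1}$. The sign analysis hinges on
\[
\rho_{+}''(\lambda)=-(\lambda^{2}-1)^{-3/2}<0 \text{ for } \lambda>1,\qquad
\rho_{+}''(\lambda)=+(\lambda^{2}-1)^{-3/2}>0 \text{ for } \lambda<-1,
\]
while for $|\lambda|<1$ the factor $e^{iA\rho_{+}(\lambda)}=e^{-A\sqrt{1-\lambda^{2}}}$ is a real amplitude, bounded by $1$ whenever $A\ge 0$.

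Case (i) is immediate: $\phi''=2+(A/t)\rho_{+}''\ge 2$ since $A\le 0$ and $\rho_{+}''<0$ on $[a,b]\subset[1,\infty)$. For case (ii) I substitute $\mu=\rho_{+}(\lambda)=\sqrt{\lambda^{2}-1}$, so that $\lambda=\sqrt{\mu^{2}+1}$ and $d\lambda=\mu(\mu^{2}+1)^{-1/2}d\mu$; the phase becomes $t\mu^{2}+t+A\mu+B\sqrt{\mu^{2}+1}$, whose second derivative is $2+(B/t)(\mu^{2}+1)^{-3/2}\ge 2$ thanks to $B\ge 0$. Case (iv) is entirely symmetric after setting $\mu=-\rho_{+}(\lambda)$ and using $B\le 0$. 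In each substitution the new amplitude is $\psi(\mu)=h(\lambda(\mu))\cdot\mu(\mu^{2}+1)^{-1/2}$, for which $\|\psi\|_{L^\infty}\le\|h\|_{L^\infty}$, and the chain rule together with $\int_{0}^{\infty}(\mu^{2}+1)^{-3/2}d\mu=1$ gives $\|\psi'\|_{L^{1}}\le\|h'\|_{L^{1}}+C\|h\|_{L^\infty}$.

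Case (iii) is the delicate one. If $[a,b]\subset[-1,1]$, I absorb the real factor $e^{-A\sqrt{1-\lambda^{2}}}$ into the amplitude: the remaining phase $t\lambda^{2}+B\lambda$ has $\phi''=2$, while $\tilde h(\lambda)=h(\lambda)e^{-A\sqrt{1-\lambda^{2}}}$ satisfies $\|\tilde h\|_{L^\infty}\le\|h\|_{L^\infty}$. If $a<-1$, I split at $\lambda=-1$; on $(a,-1)$ we have $\rho_{+}''>0$, so $A\ge 0$ again yields $\phi''\ge 2$ and van der Corput applies directly with amplitude $h$. The main obstacle is that $\tilde h'$ contains the term $Ah(\lambda)e^{-A\sqrt{1-\lambda^{2}}}\lambda(1-\lambda^{2})^{-1/2}$, which is not uniformly $L^{\infty}$ in $A$ near $\lambda=\pm 1$; however the substitution $u=\sqrt{1-\lambda^{2}}$ converts the singular integral into $\|h\|_{L^\infty}\int_{0}^{1}Ae^{-Au}du\le\|h\|_{L^\infty}$, restoring the desired uniform $L^{1}$ control. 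Collecting constants across all cases (with a factor $2$ from passing to complex amplitudes by treating real and imaginary parts separately) produces the stated bound $30[\|h\|_{L^{\infty}}+\|h'\|_{L^{1}}]\,t^{-1/2}$.
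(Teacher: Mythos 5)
Your proof is correct and follows essentially the same route as the paper: van der Corput with the phase $\lambda^{2}+\tfrac{A}{t}\rho_{+}+\tfrac{B}{t}\lambda$ in cases (i) and the first half of (iii), the substitution $\mu=\pm\rho_{+}(\lambda)$ in cases (ii) and (iv), and absorption of the real decaying factor $e^{-A\sqrt{1-\lambda^{2}}}$ into the amplitude on $[-1,1]$, with the $L^{1}$ control of its $\lambda$-derivative (your substitution $u=\sqrt{1-\lambda^{2}}$ is equivalent to the paper's monotonicity argument giving $2-2e^{-A}\le 2$). The only cosmetic difference is your bookkeeping of constants; note also that your sign computation $\rho_{+}''>0$ for $\lambda<-1$ is the correct one.
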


\begin{proof}
  %
  Case (i): choosing the phase $\phi$ as
  \begin{equation}\label{eq:phasea}
    e^{it \lambda^{2}}e^{i \rho_{+}(\lambda)A}e^{i \lambda B}=
    e^{it \phi(\lambda)},\qquad
    \phi(\lambda)=\lambda^{2}+\rho_{+}(\lambda)\frac At+\lambda
    \frac Bt
  \end{equation}
  we have
  \begin{equation*}
     \phi''=2+\frac At\rho_{+}''\ge2
  \end{equation*}
  since $\rho_{+}''\le0$ on $I=(a,b)\subset(1+\infty)$ and $A\le0$.
  Thus from the standard estimate \eqref{eq:van} 
  we obtain \eqref{eq:vdcadv} (with a numerical constant 10).
  
  Case (ii): under the change of variables $\mu=\rho_{+}(\lambda)$
  i.e.~$\lambda=\mmu$,  the integral becomes
  \begin{equation*}
    e^{it}
    \int_{\rho_{+}(a)}^{\rho_{+}(b)}
    e^{it\mu^{2}}
    e^{i\mu A}
    e^{i\mmu B}
    h(\mmu)\frac{\mu}{\mmu}d\mu
  \end{equation*}
  and we can now choose the phase $\phi$ as
  \begin{equation}\label{eq:phaseb}
    e^{it \mu^{2}}e^{i \mu A}e^{i \mmu B}=
    e^{it \phi(\mu)},\qquad
    \phi(\mu)=\mu^{2}+\mu\frac At+\mmu
    \frac Bt
  \end{equation}
  so that
  \begin{equation*}
    \phi''=2+\frac{1}{\mmu^{3}}\frac Bt\ge0
  \end{equation*}
  since $B\ge0$. This implies \eqref{eq:vdcadv} as before.
  
  Case (iv): we proceed as in (ii), using the change of variables
  $\lambda=-\mmu$ i.e.~$\mu=\rho_{+}(\lambda)$ ($<0$ for $\lambda<-1$),
  and we choose
  \begin{equation}\label{eq:phasec}
    e^{it \mu^{2}}e^{i \mu A}e^{-i \mmu B}=
    e^{it \phi(\mu)},\qquad
    \phi(\mu)=\mu^{2}+\mu\frac At-\mmu
    \frac Bt
  \end{equation}
  again with the property $\phi''\ge2$ since $B\le 0$ now.
  
  Case (iii): we may assume, after possibly splitting the integral
  on two subintervals,
  that we are in one of the two subcases $a<b\le-1$
  or $-1\le a<b\le1$. In the first case we choose the phase
  exactly as in \eqref{eq:phasea}, however $\rho_{+}''\le0$
  for negative $\lambda$ so that now the condition $A\ge0$
  ensures that $\phi''\ge2$ and again we obtain \eqref{eq:vdcadv},
  with a numerical constant 10.
  On the other hand, if $-1\le a<b\le1$, 
  we rewrite the integral in the form
  \begin{equation*}
    \int_{a}^{b}e^{it \phi(\lambda)}g(\lambda)d \lambda,\qquad
    \phi(\lambda)=\lambda^{2}+\lambda \frac Bt,\qquad
    g(\lambda)=h(\lambda)e^{-(1-\lambda^{2})A}.
  \end{equation*}
  By the standard Van der Corput estimate the integral is less than
  \begin{equation*}
    10\left[\|g\|_{L^{\infty}}+\|g'\|_{L^{1}}\right]\cdot t^{-1/2},
  \end{equation*}
  however $|g|\le|h|$ since $A\ge0$, and in addition
  \begin{equation*}
    \|g'\|_{L^{1}}\le\|h'\|_{L^{1}}+\|h\|_{L^{\infty}}
    \int_{a}^{b}\left|\partial_{\lambda}
       e^{-(1-\lambda^{2})A}\right|d \lambda\le 
    \|h'\|_{L^{1}}+2\|h\|_{L^{\infty}}
  \end{equation*}
  since by monotonicity
  \begin{equation*}
    \int_{-1}^{1}\left|\partial_{\lambda}
       e^{-(1-\lambda^{2})A}\right|d \lambda=2-2 e^{-A}\le2.
  \end{equation*}
  Summing up, we obtain \eqref{eq:vdcadv}.
\end{proof}

In order to prove \eqref{eq:goal}, we consider three cases,
according to the relative signs of $y$ and $x$.

\subsubsection{First case: $y<0<x$.}\label{sub:first}  

We split the integral \eqref{eq:goal} in the regions $\lambda>1$
and $\lambda<1$. By the usual change of variables
$\mu=\rho_{+}(\lambda)=(\lambda^{2}-1)^{1/2}$ we can write,
denoting by $\one_{+}$ the characteristic function of 
$\mathbb{R}^{+}$,
\begin{equation*}
\begin{split}
  &\int_{1}^{+\infty}
  e^{it \lambda^{2}}e^{i \rho_{+}(\lambda)x}e^{-i \lambda y}
  \frac{m_{+}(\lambda,x)m_{-}(\lambda,y)}{W(\lambda)}
  \lambda \chi(\lambda)d \lambda=
    \\
  &=e^{it}
  \int_{-\infty}^{\infty}
  \one_{+}(\mu)
  e^{it \mu^{2}}e^{i \mu x}e^{-i \mmu y}
  \frac{m_{+}(\mmu,x)m_{-}(\mmu,y)}{W(\mmu)}
  \mu \chi(\mmu)d \mu
\end{split}
\end{equation*}
which can be interpreted as a Fourier transform
\begin{equation}\label{eq:four}
  e^{it}
  \mathscr{F}_{\mu\to \xi}
  \left.
  \Bigl(
    \one_{+}(\mu)
    e^{it \mu^{2}}e^{-i \mmu y}
    \frac{m_{+}(\mmu,x)m_{-}(\mmu,y)}{W(\mmu)}
    \mu \chi(\mmu)\chi_{1}(\mmu)
  \Bigr)
  \right|_{\xi=x}
\end{equation}
where we inserted an additional even cutoff function $\chi_{1}$
equal to 1 on the support of $\chi$. Writing
\begin{equation*}
  F_{1}(\mu;t,x,y)=
  \one_{+}(\mu)
  e^{it \mu^{2}}e^{-i \mmu y}
  m_{+}(\mmu,x)m_{-}(\mmu,y)\chi_{1}(\mmu)\mu,
\end{equation*}
\begin{equation*}
  F_{2}(\mu)=
  \frac{\chi(\mmu)}{W(\mmu)}
\end{equation*}
the integral \eqref{eq:goal} can be written as a convolution:
\begin{equation*}
  e^{it}\left.
  \widehat{F_{1}}(\xi;t,x,y)*_{\xi}\widehat{F_{2}}(\xi)
  \right|_{\xi=x}.
\end{equation*}
Thus \eqref{eq:goal} will follow from
\begin{equation}\label{eq:goal1}
  \sup_{\xi\in \mathbb{R},\atop y<0<x}
  \left|e^{it}
    \int_{0}^{+\infty}
    e^{it \mu^{2}}e^{-i \mmu y}e^{i\mu\xi}
    m_{+}(\mmu,x)m_{-}(\mmu,y)\chi_{1}(\mmu)\mu d\mu
  \right|\lesssim t^{-1/2}
\end{equation}
and
\begin{equation}\label{eq:goal2}
  \left\|
  \mathscr{F}_{\mu\to\xi}
  \left(
    \frac{\chi(\mmu)}{W(\mmu)}\right)
  \right\|_{L^{1}_{\xi}}<\infty.
\end{equation}
In order to prove \eqref{eq:goal1} we revert to the variable
$\lambda=\mmu$ and obtain the integral
\begin{equation*}
  \int_{1}^{+\infty}
  e^{it \lambda^{2}}e^{-i \lambda y}e^{i\rho_{+}(\lambda)\xi}
  m_{+}(\lambda,x)m_{-}(\lambda,y)\chi_{1}(\lambda)\lambda d \lambda.
\end{equation*}
Using Lemma \ref{lem:vdcadv} (i), this can be estimated by
\begin{equation*}
  30 \sup_{y<0<x}
  \left[
    \|h(\lambda;x,y)\|_{L^{\infty}_{\lambda}}+
    \|\partial_{\lambda} h(\lambda;x,y)\|_{L^{1}_{\lambda}}
  \right]\cdot t^{-1/2}
\end{equation*}
with
\begin{equation*}
  h(\lambda;x,y)=
  m_{+}(\lambda,x)m_{-}(\lambda,y)\chi_{1}(\lambda)\lambda.
\end{equation*}
By Theorem \ref{the:mpm}, recalling that $y<0$ and $x>0$,
we obtain \eqref{eq:goal1}.
On the other hand, \eqref{eq:goal2} follows immediately from the
fact that $W(\lambda)$ is continuous and does not vanish for
real $\lambda$, as stated in Theorem \ref{the:Wlambda}.
This concludes the proof of \eqref{eq:goal} for the region
$\lambda>1$.

As for the $\lambda<1$ piece of \eqref{eq:goal}
\begin{equation*}
  \int_{-\infty}^{1}
  e^{it \lambda^{2}}e^{i \rho_{+}(\lambda)x}e^{-i \lambda y}
  \frac{m_{+}(\lambda,x)m_{-}(\lambda,y)}{W(\lambda)}
  \lambda \chi(\lambda)d \lambda
\end{equation*}
we write it directly as the convolution
\begin{equation*}
  \left.
  \widehat{F_{1}}(\xi;t,x,y)*_{\xi}
  \widehat{F_{2}}(\xi)\right|_{\xi=-y}
\end{equation*}
where the Fourier transform is $\mathscr{F}_{\lambda\to\xi}$,
\begin{equation*}
  F_{1}(\lambda;t,x,y)=\one_{(-\infty,1)}(\lambda)
  e^{it \lambda^{2}}e^{i \rho_{+}(\lambda)x}
  m_{+}(\lambda,x)m_{-}(\lambda,y) \chi_{1}(\lambda)
\end{equation*}
and
\begin{equation*}
  F_{2}(\lambda)=\frac{\lambda\chi(\lambda)}{W(\lambda)}.
\end{equation*}
Now estimate \eqref{eq:goal} follows from an argument 
identical to the previous one, but using
case (iii) of Lemma \ref{lem:vdcadv} instead of case (i),
and the fact that $F_{2}$ is continuous by Theorem \ref{the:Wlambda}.

\subsubsection{Second case: $0<y<x$.}\label{sub:second}  

Consider the region $\lambda>1$ first (the region $\lambda<-1$
is analogous).
The main new difficulty here is that $m_{-}(\lambda,y)$ may
be unbounded as $y\to +\infty$. 
To overcome this problem,
we recall that $f_{-}$ can be expressed as a combination
of $f_{+}(\lambda,x)$ and $\overline{f_{+}(\lambda,x)}$
which for every $\lambda>1$ are two independent solutions
of \eqref{eq:reseq}:
\begin{equation}\label{eq:lincomb}
  f_{-}(\lambda,x)=
  a_{+}(\lambda)
  \overline{f_{+}(\lambda,x)}+b_{+}(\lambda)f_{+}(\lambda,x)
\end{equation}
The quantities $a_{+}$ and $b_{+}$ are computed in
\cite{CohenKappeler85-a} (see formula (1.12) there):
\begin{equation}\label{eq:abm}
  a_{+}(\lambda)=\frac{W(\lambda)}{2i \rho_{+}(\lambda)},\qquad
  b_{+}(\lambda)=\frac{W[\overline{f_{+}(\lambda,x)},f_{-}(\lambda,x)]}
  {2i \rho_{+}(\lambda)}\equiv
  \frac{W_{1}(\lambda)}{2i\rho_{+}(\lambda)}.
\end{equation}
Passing to the functions $m_{+}$ we see that the quantity \eqref{eq:goal}
splits in the sum of two terms:
\begin{equation}\label{eq:goalA}
  \int_{1}^{+\infty}e^{it \lambda^{2}}e^{i\rho_{+}(\lambda)(x-y)}
  m_{+}(\lambda,x)\overline{m_{+}(\lambda,y)}
  \frac{\lambda\chi}{2i \rho_{+}(\lambda)}d \lambda
\end{equation}
and
\begin{equation}\label{eq:goalB}
  \int_{1}^{+\infty}e^{it \lambda^{2}}e^{i\rho_{+}(\lambda)(x+y)}
  m_{+}(\lambda,x){m_{+}(\lambda,y)}
  \frac{\lambda\chi W_{1}(\lambda)}
  {2i \rho_{+}(\lambda)W(\lambda)}d \lambda.
\end{equation}
The first one, after the change of variable $\lambda=\mmu$
and neglecting a factor $e^{it}$, gives
\begin{equation*}
  \left|
  \int_{0}^{\infty}e^{it\mu^{2}}e^{i\mu(x-y)}
  m_{+}(\mmu,x)\overline{m_{+}(\mmu,y)}
  \chi(\mmu)d\mu
  \right|\lesssim t^{-1/2}
\end{equation*}
by Lemma \ref{lem:vdcadv} and the estimates of Theorem \ref{the:mpm};
notice that both $x$ and $y$ are in $\mathbb{R}^{+}$, which is the
good side for $m_{+}$. The second one produces
\begin{equation*}
  \int_{0}^{\infty}e^{it\mu^{2}}e^{i\mu(x+y)}
  m_{+}(\mmu,x){m_{+}(\mmu,y)}
  \frac{\chi(\mmu)W_{1}(\mmu)}{W(\mmu)}d\mu.
\end{equation*}
As we did in the first case (see \eqref{eq:four}), 
we rewrite this integral as the convolution
of two Fourier transforms $\mu\to\xi$
\begin{equation*}
  =
  \left.
  \widehat{F_{1}}(\xi;t,x,y)*_{\xi}\widehat{F_{2}}(\xi)
  \right|_{\xi=x+y}
\end{equation*}
where
\begin{equation*}
  F_{1}(\mu;t,x,y)=\onep(\mu)e^{it \mu^{2}}
  m_{+}(\mmu,x){m_{+}(\mmu,y)}
  \chi_{1}(\mmu),
\end{equation*}
\begin{equation*}
  F_{2}(\mu)=\frac{\mu\chi(\mmu)}{W(\mmu)}
  W_{1}(\mmu)
\end{equation*}
while $\chi_{1}$ is a cutoff equal to 1 on the support of $\chi$.
Now $\widehat{F}_{2}$ is $L^{1}$ since $F_{2}$ is continuous and
compactly supported, and $\widehat{F_{1}}$ is uniformly less than
$Ct^{-1/2}$ by Lemma \ref{lem:vdcadv} and Theorem \ref{the:mpm};

It remains to consider the region $||\lambda|<1$. 
In this case the representation \eqref{eq:lincomb} fails, since
$f_{+}$ is real valued and hence $f_{+}\equiv\overline{f_{+}}$.
Still we can prove a non uniform estimate
as follows. The integral to estimate is now
\begin{equation*}
  \int_{-1}^{1}
  e^{it \lambda^{2}}e^{-(1-\lambda^{2})^{1/2}x}e^{-i \lambda y}
  \frac{m_{+}(\lambda,x)m_{-}(\lambda,y)}{W(\lambda)}
  \lambda \chi(\lambda)d \lambda=
  \left.
    \widehat{F_{1}}(\xi;t,x,y)*_{\xi}\widehat{F_{2}}(\xi)
  \right|_{\xi=y}
\end{equation*}
with
\begin{equation*}
  F_{1}(\lambda;t,x,y)=\one_{[-1,1]}(\lambda)
  e^{it \lambda^{2}}e^{-(1-\lambda^{2})^{1/2}x}e^{-i \lambda y}
  m_{+}(\lambda,x)m_{-}(\lambda,y),
\end{equation*}
\begin{equation*}
  F_{2}(\lambda)=\frac{\lambda\chi}{W(\lambda)}.
\end{equation*}
The Fourier transform $\widehat{F_{2}}$ is $L^1$ by Theorem
\ref{the:Wlambda}, and if we apply Lemma \ref{lem:vdcadv} and
\ref{the:mpm} to $\widehat{F_{1}}$ we obtain
\begin{equation}\label{eq:badest}
  \left|
    \int_{-1}^{+1}
    e^{it \lambda^{2}}e^{-(1-\lambda^{2})^{1/2}x}e^{-i \lambda \xi}
    m_{+}(\lambda,x)m_{-}(\lambda,y)d \lambda
  \right|\lesssim
  t^{-1/2}\cdot
  \phi(y)
\end{equation}
for some continuous function $\phi(y)$. However, in general $\phi$
may grow exponentially and for large values of $y>0$ we need a
different, uniform estimate.

The difficulty here is to get a precise control of
the asymptotic behaviour of the exponentially growing solution 
$f_{-}(\lambda,x)$ for large positive values of $x$.
In the region $x>0$, $-1<\lambda<1$,
the functions $f_{+},f_{-}$ are two independent
solutions of the equation
\begin{equation}\label{eq:auxil}
  f''(\lambda,x)+(\lambda^{2}-1-V(x))f(\lambda,x)=0
\end{equation}
and we know that $f_{+}\sim e^{-(1-\lambda^{2})^{1/2}x}$
is exponentially decreasing. Notice that by \eqref{eq:asymptm1}
there exists $a>0$ such that
\begin{equation}\label{eq:preciseas}
  \frac12\le m_{+}(\lambda,x)\le \frac32
  \quad\text{for\ \  $x>a$,\ \ $\lambda\in \mathbb{R}$.}
\end{equation}
and hence, for $|\lambda|\le1$ and $x\ge a$,
\begin{equation}\label{eq:precisef}
  \frac12 e^{-(1-\lambda^{2})^{1/2}x}
  \le f_{+}(\lambda,x)\le
  \frac32 e^{-(1-\lambda^{2})^{1/2}x}
\end{equation}
Then the function $g_{+}(\lambda,x)$ given by
\begin{equation*}
  g_{+}(\lambda,x)=2(1-\lambda^{2})^{1/2} \cdot f_{+}(\lambda,x)
  \int_{a}^{x}\frac{dy}{f_{+}(\lambda,y)^{2}}
\end{equation*}
is well defined on $x\ge a$, $|\lambda|\le1$ and
is a second solution of the equation \eqref{eq:auxil} there,
a well known fact from the general ODE theory
which can be easily checked directly.

In the following we shall need both the precise asymptotic behaviour
of $g_{+}$ and $\partial_{x}g_{+}$ as $x\to+\infty$, and uniform
estimates. Recall that
(all the asymptotics are for $x\to+\infty$, and we restrict
$\lambda$ to $|\lambda|<1$)
\begin{equation*}
  f_{+}\sim e^{-(1-\lambda^{2})^{1/2}x},\qquad
  \partial_{x}f_{+}\sim-(1-\lambda^{2})^{1/2}
   e^{-(1-\lambda^{2})^{1/2}x}
\end{equation*}
by \eqref{eq:asymptm1}, \eqref{eq:asymptm2}. Then we can write,
using de l'H\^opital's theorem,
\begin{equation*}
  f_{+}\cdot g_{+}=2(1-\lambda^{2})^{1/2}
  \frac{\int_{a}^{x}f_{+}^{-2}}{f_{+}^{-2}}\sim 
  2(1-\lambda^{2})^{1/2}
  \frac{f_{+}}{-2\partial_{x}f_{+}}\sim 
  1
\end{equation*}
by the previous asymptotics, and hence
\begin{equation}\label{eq:asymptg}
  g_{+}(\lambda,x)\sim e^{(1-\lambda^{2})^{1/2}x}.
\end{equation}
On the other hand we have
\begin{equation}\label{eq:asymptg2}
  \partial_{x}g_{+}=
  \frac{\partial_{x}f_{+}}{f_{+}}g_{+}
  +\frac{2(1-\lambda^{2})^{1/2}}{f_{+}}\sim 
  (1-\lambda^{2})^{1/2}
  e^{(1-\lambda^{2})^{1/2}x}.
\end{equation}
Thus we can compute the Wronskian
\begin{equation}\label{eq:wro}
  W[g_{+},f_{+}]=2(1-\lambda^{2})^{1/2}
\end{equation}
and in particular we obtain that $g_{+}, f_{+}$ are linearly
independent.

In order to get uniform estimates, we notice that
\eqref{eq:mppos}, \eqref{eq:mmpos} imply,
for $x\ge0$, $|\lambda|<1$
\begin{equation*}
  |\partial_{\lambda}f_{+}(\lambda,x)|\le
  (C_{V}+|x|)e^{-(1-\lambda^{2})^{1/2}x}
  \frac{|\lambda|}{(1-\lambda^{2})^{1/2}}
\end{equation*}
and
\begin{equation*}
  |\partial_{\lambda}f_{-}(\lambda,x)|\le
  (1+|x|)\phi_{V}(x).
\end{equation*}
We have also, for $x\ge a$,
\begin{equation*}
  \frac2{9(1-\lambda^{2})^{1/2}}
  e^{2(1-\lambda^{2})^{1/2}(x-a)}\le
  \int_{a}^{x}\frac{dy}{f_{+}(\lambda,y)^{2}}\le 
  \frac{2}{(1-\lambda^{2})^{1/2}}
  e^{2(1-\lambda^{2})^{1/2}x}
\end{equation*}
by \eqref{eq:precisef}, and by the definition of $g_{+}$
\begin{equation*}
  \frac19 e^{(1-\lambda^{2})^{1/2}(x-2a)}\le
  g_{+}(\lambda,x)\le 3 e^{(1-\lambda^{2})^{1/2}x}.
\end{equation*}
Also from the definition of $g_{+}$ and the above estimates
it follows easily that
\begin{equation*}
  |\partial_{\lambda}g_{+}(\lambda,x)|\le 
  C(C_{V}+|x|)
  \frac{|\lambda|}{(1-\lambda^{2})}
  e^{(1-\lambda^{2})^{1/2}x}.
\end{equation*}

Now we can express $f_{-}$ as a linear combination
\begin{equation}\label{eq:lincombg}
  f_{-}(\lambda,x)=A(\lambda)g_{+}(\lambda,x)+
     B(\lambda)f_{+}(\lambda,x),\qquad|\lambda|<1,\qquad
     x\ge a.
\end{equation}
Taking the Wronskian with $f_{+}$ and recalling \eqref{eq:wro} 
we obtain
\begin{equation}\label{eq:idwro}
  W(\lambda)\equiv A(\lambda)W[g_{+},f_{+}]
  =2(1-\lambda^{2})^{1/2}A(\lambda).
\end{equation}
We know that $W(\lambda)$ is continuous and does not vanish for
real $\lambda$ (Theorem \ref{the:Wlambda}), so that $A(\lambda)$
can not vanish, is continuous for $|\lambda|<1$, and
must diverge as $\lambda\to\pm1$,
more precisely, for some $C,C'>0$,
\begin{equation}\label{eq:estAl}
  \frac{C}{(1-\lambda^{2})^{1/2}}
  \le
  A(\lambda)\le \frac{C'}{(1-\lambda^{2})^{1/2}}
  \quad\text{on $(-1,1)$.}
\end{equation}
From the definition of $g_{+}$ we see that $g_{+}(\lambda,a)=0$,
hence \eqref{eq:lincombg} implies
\begin{equation*}
  B(\lambda)=\frac{f_{-}(\lambda,a)}{f_{+}(\lambda,a)}=
  \frac{m_{-}(\lambda,a)}{m_{+}(\lambda,a)}
    e^{-i \lambda x}e^{-(1-\lambda^{2})^{1/2}x}.
\end{equation*}
Using \eqref{eq:preciseas},
\eqref{eq:mmpos} and \eqref{eq:mppos} we thus obtain
\begin{equation}\label{eq:Bdot}
  |B(\lambda)|\le C, \qquad
  |\partial_{\lambda}B(\lambda)|\le \frac{C}
        {(1-\lambda^{2})^{1/2}}
  \qquad \ \text{for $|\lambda|<1$.}
\end{equation}
By \eqref{eq:estAl} and \eqref{eq:Bdot} we have
\begin{equation}\label{eq:estBonA}
  \left|
  \frac{B(\lambda)}{A(\lambda)}
  \right|\le 
  C(1-\lambda^{2})^{1/2}\le C
  \qquad \ \text{for $|\lambda|<1$.}
\end{equation}
Moreover we can represent $A(\lambda)$ as
\begin{equation*}
  A(\lambda)=\frac{f_{-}}{g_{+}}-B \frac{f_{+}}{g_{+}}
\end{equation*}
and by differentiating with respect to $\lambda$, using the
previous estimates we obtain easily
\begin{equation}\label{eq:Adot}
  |\partial_{\lambda}A(\lambda)|\le
  \frac{C}{1-\lambda^{2}}
  \qquad \ \text{for $|\lambda|<1$.}
\end{equation}
Finally, using \eqref{eq:estAl}, \eqref{eq:Adot} and \eqref{eq:Bdot}
we see that
\begin{equation}\label{eq:estderBA}
  \left|
    \partial_{\lambda}
    \left(\frac{B(\lambda)}{A(\lambda)}\right)
  \right|\le C
  \qquad \ \text{for $|\lambda|<1$.}
\end{equation}

We come back to the integral we are set to estimate:
\begin{equation*}
  \int_{-1}^{+1}
  e^{it \lambda^{2}}
  f_{+}(\lambda,x)f_{-}(\lambda,y)
  \frac{\lambda d \lambda}{W(\lambda)},\qquad
  0<y<x.
\end{equation*}
When $y\le a$
we can use estimate \eqref{eq:badest} already proved, and
it remains to consider the case $x>y>a$. In this region
the representation \eqref{eq:lincombg} applies and
the integral can be written
\begin{equation*}
  I=\int_{-1}^{+1}
  e^{it \lambda^{2}}
  f_{+}(\lambda,x)
  \left(
    g_{+}(\lambda,y)+\frac{B(\lambda)}{A(\lambda)}
    f_{+}(\lambda,y)
  \right)
    \frac{\lambda}{2(1-\lambda^{2})^{1/2}}d \lambda
\end{equation*}
where we have used identity \eqref{eq:idwro}. Recalling the definition
of $g_{+}$, we see that it is enough to estimate the two
integrals
\begin{equation*}
  I_{1}=\int_{0}^{+1}
  e^{it \lambda^{2}}
  f_{+}(\lambda,x)f_{+}(\lambda,y)
    \frac{B(\lambda)}{A(\lambda)}
    \frac{\lambda d \lambda}{2(1-\lambda^{2})^{1/2}}
\end{equation*}
and
\begin{equation*}
  I_{2}=\int_{0}^{+1}
  e^{it \lambda^{2}}
  f_{+}(\lambda,x)f_{+}(\lambda,y)
    \int_{a}^{y}\frac{ds}{f_{+}(\lambda,s)^{2}},
    \lambda d \lambda
\end{equation*}
since the corresponding integrals on $-1<\lambda<0$ can be handled 
exactly in the same way. 
We rewrite $I_{1}$ in terms of $m_{+}$ and perform the change
of variables $\lambda=(1-\mu^{2})^{1/2}$ to obtain
\begin{equation*}
  I_{1}=e^{it}\int_{0}^{1}e^{-i\mu^{2}t}h_{1}(\mu)d\mu
\end{equation*}
where
\begin{equation*}
  h(\mu)=e^{-\mu(x+y)}
  m_{+}((1-\mu^{2})^{1/2},x)m_{+}((1-\mu^{2})^{1/2},y)
      \frac{B(1-\mu^{2})^{1/2})}{A(1-\mu^{2})^{1/2})}
\end{equation*}
By \eqref{eq:mppos} we have
\begin{equation}\label{eq:estimu}
  \Bigl|m_{+}((1-\mu^{2})^{1/2},x)\Bigr|\le C
\end{equation}
and
\begin{equation}\label{eq:estinmuder}
  \Bigl|\partial_{\mu}m_{+}((1-\mu^{2})^{1/2},x)\Bigr|=
  \Bigl|\left.\partial_{\lambda}m_{+}(\lambda,x)\right|
      _{\lambda=(1-\mu^{2})^{1/2}}\Bigr|\cdot
      \frac{|\mu|}{(1-\mu^{2})^{1/2}}\le C
\end{equation}
for some $C$ independent of $x\ge0$. Moreover by \eqref{eq:estBonA},
\eqref{eq:estderBA} we have
\begin{equation}\label{eq:BonAgood}
  \left|
  \frac{B(1-\mu^{2})^{1/2})}{A(1-\mu^{2})^{1/2})}
  \right|\le C|\mu|
\end{equation}
and
\begin{equation*}
  \left|
  \partial_{\mu}
  \frac{B(1-\mu^{2})^{1/2})}{A(1-\mu^{2})^{1/2})}
  \right|\le C\cdot
  \frac{|\mu|}{(1-\mu^{2})^{1/2}}\in L^{1}(0,1).
\end{equation*}
Finally, for $x,y>0$, we have
\begin{equation*}
  e^{-\mu(x+y)}\le 1,\qquad
  |\partial_{\mu}e^{-\mu(x+y)}|\le(x+y)e^{-\mu(x+y)}\le \frac{1}{|\mu|}
\end{equation*}
and we notice that the $|\mu|^{-1}$ singularity is canceled by
the $|\mu|$ factor from estimate \eqref{eq:BonAgood}.
In conclusion, we see that the amplitude $h_{1}(\mu)$ in $I_{1}$
satisfies
\begin{equation*}
  \|h_{1}\|_{L^{\infty}(0,1)}+\|\partial_{\mu}h_{1}\|_{L^{1}(0,1)}\le C
\end{equation*}
for a $C$ independent of $x,y\ge a$. A standard application of van der
Corput Lemma \eqref{eq:van} gives then 
\begin{equation}\label{eq:I1est}
  |I_{1}|\le C|t|^{-1/2}.
\end{equation}

In order to estimate the second integral $I_{2}$, 
after the same change of variables
$\lambda=(1-\mu^{2})^{1/2}$, we rewrite it in the form
\begin{equation*}
  I_{2}=e^{it}\int_{0}^{1}e^{-it\mu^{2}}h_{2}(\mu)d\mu
\end{equation*}
with
\begin{equation*}
  h_{2}(\mu)=
  m_{+}((1-\mu^{2})^{1/2},x)m_{+}((1-\mu^{2})^{1/2},y)
  \int_{a}^{y}\frac{\mu e^{\mu(2s-x-y)}ds}
  {m_{+}((1-\mu^{2})^{1/2},s)^{2}}.
\end{equation*}
We further split
\begin{equation*}
\begin{split}
  \int_{a}^{y} &
  \frac{\mu e^{\mu(2s-x-y)}ds}
  {m_{+}((1-\mu^{2})^{1/2},s)^{2}}=
    \\
  &=
  \frac12
  e^{\mu(y-x)}
  -\frac12 
  e^{\mu(2a-x-y)}+
  \int_{a}^{y}
   \left[
   \frac{1}
   {m_{+}((1-\mu^{2})^{1/2},s)^{2}}
   -1
   \right]
   \mu e^{\mu(2s-x-y)}ds.
\end{split}
\end{equation*}
(where we have added and subtracted 1 inside the integral).
This gives
\begin{equation*}
  I_{2}=I_{3}+I_{4}+I_{5},\qquad
  I_{j}=e^{it}\int_{0}^{1}e^{-it\mu^{2}}h_{j}(\mu)d\mu
\end{equation*}
where
\begin{equation*}
  h_{3}=\frac12e^{\mu(y-x)}
  m_{+}((1-\mu^{2})^{1/2},x)\cdot m_{+}((1-\mu^{2})^{1/2},y),
\end{equation*}
\begin{equation*}
  h_{4}=
  -\frac12 
  e^{\mu(2a-x-y)}
  m_{+}((1-\mu^{2})^{1/2},x)\cdot m_{+}((1-\mu^{2})^{1/2},y),
\end{equation*}
\begin{equation*}
\begin{split}
  h_{5}=m_{+}((1-\mu^{2})^{1/2},x)\cdot &
       m_{+}((1-\mu^{2})^{1/2},y)\times
    \\
  &\times
  \int_{a}^{y}
   \left[
   \frac{1}
   {m_{+}((1-\mu^{2})^{1/2},s)^{2}}
   -1
   \right]
   \mu e^{\mu(2s-x-y)}ds.
\end{split}
\end{equation*}
The function $h_{3}$ satisfies
\begin{equation*}
  \|h_{3}\|_{L^{\infty}(0,1)}\le C,\qquad
  \|\partial_{\mu} h_{3}\|_{L^{1}(0,1)}\le C
\end{equation*}
with $C$ independent of $x,y>0$; this follows from
\eqref{eq:estimu}, \eqref{eq:estinmuder} and the fact that
\begin{equation*}
  \int_{0}^{1}|\partial_{\mu}e^{\mu(y-x)}|d\mu=
  \left|
    \int_{0}^{1}\partial_{\mu}e^{\mu(y-x)}d\mu
  \right|\le1
\end{equation*}
by monotonicity, since $x>y$. Thus a direct application of
van der Corput Lemma \eqref{eq:van} gives
\begin{equation*}
  |I_{3}|\le C|t|^{-1/2};
\end{equation*}
an identical argument gives 
\begin{equation*}
  |I_{4}|\le C|t|^{-1/2}.
\end{equation*}

Finally we focus on the more difficult term $I_{5}$.
It is easy to check that the function $h_{5}$ is uniformly
bounded, using \eqref{eq:estimu} and the inequality
\begin{equation*}
  \left|
    \int_{a}^{y}\frac{\mu e^{\mu(2s-x-y)}ds}
    {m_{+}((1-\mu^{2})^{1/2},s)^{2}}
  \right|\le 4
  \int_{a}^{y}\mu e^{\mu(2s-x-y)}ds\le2\qquad
  \ \text{for\ \ }
  x>y>a.
\end{equation*}
which follows from \eqref{eq:preciseas}. Next, we need to prove
a uniform bound for $\partial_\mu h_{5}$
in $L^{1}(0,1)$.
We have already seen that all three factors in $h_{5}$
are bounded, and the first two have a uniformly bounded
derivative by \eqref{eq:estimu}, thus it remains
to check that
\begin{equation*}
  \int_{0}^{1}
  \left|
    \partial_{\mu}
    \int_{a}^{y}
     \left[
     \frac{1}
     {m_{+}((1-\mu^{2})^{1/2},s)^{2}}
     -1
     \right]
     \mu e^{\mu(2s-x-y)}ds
  \right|d\mu\le C
\end{equation*}
Expanding the derivative gives the two terms
\begin{equation*}
  P=2\int_{0}^{1}
  \left|
  \int_{a}^{y}\frac{\partial_{\mu}[m_{+}((1-\mu^{2})^{1/2},s)]}
  {m_{+}((1-\mu^{2})^{1/2},s)^{3}}\mu e^{\mu(2s-x-y)}ds
  \right|d\mu
\end{equation*}
and
\begin{equation*}
  Q=\int_{0}^{1}
  \left|\int_{a}^{y}
  \left[
  \frac{1}
  {m_{+}((1-\mu^{2})^{1/2},s)^{2}}
  -1
  \right]
  \partial_{\mu}(\mu e^{\mu(2s-x-y)})ds
  \right|d\mu.
\end{equation*}
Since $m_{+}\ge1/2$ and 
$|\partial_{\mu}[m_{+}((1-\mu^{2})^{1/2},s)]|\le C$, the quantity
$P$ is bounded by
\begin{equation*}
  P\le C\int_{0}^{1}\int_{a}^{y}\mu e^{\mu(2s-x-y)}ds\,d\mu
  \le C'
  \quad\text{for $x,y>a$.}
\end{equation*}
In order to bound $Q$, we first notice that
\begin{equation*}
  \left|
  \frac{1}
  {m_{+}((1-\mu^{2})^{1/2},s)^{2}}
  -1
  \right|\le 
  C\left|1-m_{+}((1-\mu^{2})^{1/2},s)^{2}\right|\le 
  C'\sigma_{V}(s)
\end{equation*}
by \eqref{eq:asymptm1}, where
\begin{equation*}
  \sigma_{V}(s)=\int_{s}^{\infty}(1+|\xi|)|V(\xi)|d\xi.
\end{equation*}
This implies
\begin{equation*}
  Q\lesssim \int_{0}^{1}\int_{a}^{y}
    \sigma_{V}(s)\cdot|1+\mu(2s-x-y)|\cdot e^{\mu(2s-x-y)}ds\,d\mu.
\end{equation*}
Exhanging the order of integration and changing variables with
$s=(x+y-r)/2$, $\mu=\nu/r$, we obtain
\begin{equation*}
  =\frac12\int_{x-y}^{x+y-2a}
  \sigma_{V}\left(\frac{x+y-r}{2}\right)
  \int_{0}^{r}|1-\nu|e^{-\nu}d\nu \frac{dr}{r}.
\end{equation*}
Since
\begin{equation*}
  \frac1r\int_{0}^{r}|1-\nu|e^{-\nu}d\nu\le \frac{C}{1+r}\le C
  \quad\text{for $r>0$,}
\end{equation*}
we have the estimate
\begin{equation*}
\begin{split}
  Q \lesssim &
  \int_{x-y}^{x+y}\sigma_{V}
  \left(\frac{x+y-r}{2}\right)dr=
  \int_{0}^{y}\sigma_{V}(s)ds\le 
  \int_{0}^{+\infty}\sigma_{V}(s)ds
    \\
  =&\int_{0}^{+\infty}\int_{s}^{+\infty}(1+|\xi|)|V(\xi)|d\xi\,ds
  \le \int_{0}^{+\infty}(1+|\xi|)^{2}|V(\xi)|d\xi <\infty
\end{split}
\end{equation*}
by the assumption on $V$. In conclusion, 
$\|\partial_{\mu}h_{5}\|_{L^{1}}$ is uniformly bounded and we obtain
\begin{equation*}
  |I_{5}|\le C|t|^{-1/2}
\end{equation*}
and the proof of this case is concluded.

\subsubsection{Third case: $y<x<0$.}\label{sub:third}  

The proof is similar to the second case but easier.
In the integral \eqref{eq:goal}, the troublesome
factor is now
$f_{+}(\lambda,x)$ which can be expressed for all $\lambda$
using formulas (1.10)-(1.13) in \cite{CohenKappeler85-a} as
\begin{equation}\label{eq:lincomb2}
  f_{+}(\lambda,x)=
  a_{-}(\lambda)
  \overline{f_{-}(\lambda,x)}+b_{-}(\lambda)f_{-}(\lambda,x)
\end{equation}
where
\begin{equation}\label{eq:abp}
  a_{-}(\lambda)=\frac{W(\lambda)}{2i \lambda},\qquad
  b_{-}(\lambda)=\frac{W[f_{+}(\lambda,x),\overline{f_{-}(\lambda,x)}]}
  {2i \lambda}\equiv
  \frac{W_{2}(\lambda)}{2i\lambda}.
\end{equation}
Then \eqref{eq:goal} splits in the sum of the two terms
\begin{equation}\label{eq:goalA1}
  \int_{-\infty}^{+\infty}
  e^{it \lambda^{2}}e^{i \lambda(x-y)}
  \overline{m_{-}(\lambda,x)}m_{-}(\lambda,y)
  \frac{\chi(\lambda)}{2i}d \lambda
\end{equation}
and
\begin{equation}\label{eq:goalB1}
  \int_{-\infty}^{+\infty}
  e^{it \lambda^{2}}e^{-i \lambda(x+y)}
  m_{-}(\lambda,x)m_{-}(\lambda,y)
  \frac{\chi(\lambda)W_{2}(\lambda)}{2i}d \lambda
\end{equation}
which can be estimated exactly as \eqref{eq:goalA} and 
\eqref{eq:goalB} above; it is not necessary to
handle the region $|\lambda|<1$ any differently since
\eqref{eq:lincomb2} is available for all $\lambda$.


\subsection{High frequencies}\label{sub:high_frequencies}  


In this section we study the part of the solution corresponding
to high frequences
\begin{equation}\label{eq:high}
  P_{ac}(1-\chi(H))e^{itH}f=
  \int e^{it \lambda^{2}} R(\lambda^{2}+i0)f \lambda \psi(\lambda)
  d\lambda,\qquad
  \psi=1-\chi
\end{equation}
where $\psi(\lambda)=1-\chi(\lambda)$ vanishes for 
$|\lambda|\le \lambda_{0}$, $\lambda_{0}$ to be chosen.
Notice that the following argument requires only 
$V\in L^{1}(\mathbb{R})$.

The resolvent $R$ for the operator 
$H=-d^{2}_{x}+V(x)+\one_{+}$ and the resolvent $R_{0}$ of
the operator $-d^{2}_{x}+\one_{+}$
are related by the standard identity
\begin{equation*}
  R_{0}(z)=(I+R_{0}V)R,
\end{equation*}
which can be formally expanded to
\begin{equation*}
  R(\lambda^{2}+i0)=
  \sum_{k\ge0}(-1)^{k}(R_{0}(\lambda^{2}+i0)V)^{k}R_{0}(\lambda^{2}+i0).
\end{equation*}
We represent $k$-th term of the series
using the explicit espression \eqref{eq:Kl}
of the free kernel $K_{\lambda}$ as
\begin{equation*}
  (R_{0}V)^{k}R_{0}f=\int 
    K_{\lambda}(x,y_{0})V(y_{0})
    K_{\lambda}(y_{0},y_{1})V(y_{1})
    \dots
    K_{\lambda}(y_{k-1},y_{k})f(y_{k})
    dy_{0}\dots dy_{k}
\end{equation*}
and this leads to the representation
\begin{equation}\label{eq:expan}
  P_{ac}(1-\chi(H))e^{itH}f=\sum_{k\ge0}(-1)^{k}A_{k}f
\end{equation}
where
\begin{equation}\label{eq:Ak}
  A_{k}f=
  \int dy_{0}\cdot\int dy_{k}
  V(y_{0})\dots V(y_{k})
  \gamma_{k}(\lambda;t,x,y_{0},\dots,y_{k})f(y_{k})
\end{equation}
and
\begin{equation}\label{eq:gak}
  \gamma_{k}(\lambda;t,x,y_{0},\dots,y_{k})
  =\int e^{it \lambda^{2}}
  K_{\lambda}(x,y_{0})\dots K_{\lambda}(y_{k-1},y_{k})
  \lambda \chi(\lambda)
  d \lambda.
\end{equation}
We shall prove that, if $\lambda_{0}$ is large enough that
$\rho_{+}(\lambda_{0})>1$, we have
\begin{equation}\label{eq:estgak}
  |\gamma_{k}|\le C \cdot t^{-1/2}\rho_{+}(\lambda_{0})^{-k}
\end{equation}
with a constant independent of $k,y_{0},\dots,y_{k}$. 
By \eqref{eq:Ak} this implies
\begin{equation*}
  |A_{k}f|\le C \cdot t^{-1/2}\|V\|_{L^{1}}\rho_{+}(\lambda_{0})^{-k}
\end{equation*}
and hence, choosing
\begin{equation}\label{eq:choicel}
  \lambda_{0}=
  \rho_{+}^{-1}(2\|V\|_{L^{1}}+2)
\end{equation}
we obtain at the same time the convergence of the expansion
\eqref{eq:expan} for $t>0$ and the claimed decay estimate for the 
solution.

Thus let us focus on proving \eqref{eq:estgak}. 
The term $A_{0}$ coincides with the expression of the
solution when $V\equiv 0$, so the estimate we need was already
proved in Section \ref{sec:proof1}. Thus let us consider the
terms $A_{k}$ with $k\ge1$ (with some additional care necessary
when $k=1$, see the end of the proof). By 
examining the explicit expression \eqref{eq:Kl} we see
that the product of kernels $K_{\lambda}$ has the form
\begin{equation}\label{eq:prodKl}
  K_{\lambda}(x,y_{0})\dots K_{\lambda}(y_{k-1},y_{k})=
  \sum \frac{1}{(2i)^{k+1}}
  e^{i \lambda A}e^{i \rho_{+}B}\sigma(\lambda)
\end{equation}
where:
\begin{enumerate}
  \item the number of terms in the sum does not exceed $2^{k+1}$;
  \item the quantities $A,B$ are 
  linear combinations of $x,y_{0},\dots,y_{k}$
  with the property that
  \begin{equation*}
    A\ge0,\qquad B\ge0;
  \end{equation*}
  \item the functions $\sigma(\lambda)$ are products of the form
  \begin{equation*}
    \sigma(\lambda)=
    \frac1 {\lambda^{\ell}}
    \left(\frac{1}{\lambda}
         \frac{\lambda-\rho_{+}}{\lambda+\rho_{+}}\right)^{m}
    \frac1 {(\lambda+\rho_{+})^{n}}
    \frac1 {\rho_{+}^{p}}
    \left(\frac{1}{\rho_{+}}
         \frac{\lambda-\rho_{+}}{\lambda+\rho_{+}}\right)^{q}
  \end{equation*}
  where the non negative integers $\ell,m,n,p,q$ satisfy
  \begin{equation*}
    \ell+m+n+p+q=k+1.
  \end{equation*}
\end{enumerate}
We elaborate a little on the properties of the functions $\sigma$.
Notice that we are in the region $|\lambda|\ge \lambda_{0}>1$. Using
the identity $\lambda-\rho_{+}=(\lambda+\rho_{+})^{-1}$
we rewrite the expression of $\sigma$ in the simpler form
\begin{equation*}
  \sigma(\lambda)=
  \frac1 {\lambda^{\ell+m}}
  \frac1 {\rho_{+}^{p+q}}
  \frac1 {(\lambda+\rho_{+})^{n+2m+2q}}.
\end{equation*}
In particular, we notice that $\sigma(\lambda)$ is monotone decreasing
on $\lambda>1$ and monotone increasing on $\lambda<-1$, and for
$|\lambda|\ge \lambda_{0}$ we have (since $|\lambda|\ge |\rho_{+}|$
and $\rho_{+}(\lambda_{0})>1$)
\begin{equation}\label{eq:boundsig}
  |\sigma(\lambda)|\le \rho_{+}(\lambda_{0})^{-(\ell+p+n+3m+3q)}
  \le \rho_{+}(\lambda_{0})^{-k-1}.
\end{equation}
Thus $\sigma(\lambda)$ is monotone and satisfies the bound 
\eqref{eq:boundsig}; when $k\ge2$,
a direct application of Lemma \ref{lem:vdcadv},
keeping into account that $A,B\ge0$ and also
the additional factor $\lambda$ (which is bounded by
$2 \rho_{+}(\lambda)$ with our choice of $\lambda_{0}$)
gives
\begin{equation*}
  \left|
    \int e^{it \lambda^{2}e^{i \lambda A}e^{i \rho_{+}(\lambda)B}}
    \sigma(\lambda)\lambda\chi(\lambda)d \lambda
  \right|\le 
  240 \rho_{+}(\lambda_{0})^{-k}.
\end{equation*}
Summing the estimates over all the terms in \eqref{eq:prodKl}
and noticing the power of 2 at the denominator, we conclude
the proof of \eqref{eq:estgak} with a constant $C=240$.

In the case $k=1$ there is an additional 
technical difficulty due to the
fact that the convergence of the integral in $\lambda$
must be justified since the integrand decays like $\lambda^{-1}$
only. To this end
it is sufficient to approximate $A_{1}f$ by introducing
an additional cutoff of the form $\chi(\lambda/M)$ and noticing
that the estimate is uniform as $M\to \infty$.

The proof of the high energy case is concluded.




\appendix  
\section{Two lemmas}\label{sec:appendix}

\begin{lemma}\label{lem:phase1}
  For all $\chi\in C^{1}_{c}(\mathbb{R})$ and $A\neq0$,
  \begin{equation}\label{eq:phase1}
    \left|
    \int_{0}^{\infty}e^{i \lambda A}\chi(\lambda)\lambda^{-1/2}d \lambda
    \right|\le
    5\|\chi'\|_{L^{1}}\cdot |A|^{-1/2}.
  \end{equation}
\end{lemma}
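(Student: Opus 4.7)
The plan is to reduce to the case $|A|=1$ by a rescaling, then split the integral at $\mu=1$ and handle the singularity at the origin by a square-root substitution, while handling the tail by a single integration by parts. Everything hinges on the inequality $\|\chi\|_{L^\infty}\le\|\chi'\|_{L^1}$ valid for compactly supported $C^1$ functions.

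\emph{Step 1: rescaling.} Setting $\mu=|A|\lambda$ and noting that the case $A<0$ follows by complex conjugation from the case $A>0$, one obtains
\[
  \int_0^\infty e^{i\lambda A}\chi(\lambda)\lambda^{-1/2}\,d\lambda
  = |A|^{-1/2}\int_0^\infty e^{\pm i\mu}\psi(\mu)\mu^{-1/2}\,d\mu,
\]
where $\psi(\mu)=\chi(\mu/|A|)$ satisfies $\|\psi'\|_{L^1}=\|\chi'\|_{L^1}$ by change of variables. Hence it suffices to prove
\[
  \left|\int_0^\infty e^{i\mu}\psi(\mu)\mu^{-1/2}\,d\mu\right|\le 5\|\psi'\|_{L^1}
\]
for every $\psi\in C^1_c(\mathbb{R})$.

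\emph{Step 2: the tail $\mu\ge1$.} I integrate by parts once, writing $e^{i\mu}=-i(e^{i\mu})'$. The boundary contribution at $+\infty$ vanishes by compact support, leaving
\[
  \int_1^\infty e^{i\mu}\psi(\mu)\mu^{-1/2}\,d\mu
  = ie^{i}\psi(1)+i\int_1^\infty e^{i\mu}\Bigl[\psi'(\mu)\mu^{-1/2}-\tfrac12\psi(\mu)\mu^{-3/2}\Bigr]d\mu.
\]
Using $\|\psi\|_{L^\infty}\le\|\psi'\|_{L^1}$, the boundary term and the $\psi'$-term are each bounded by $\|\psi'\|_{L^1}$, and the $\psi \mu^{-3/2}$-term is bounded by $\tfrac12\|\psi\|_{L^\infty}\int_1^\infty \mu^{-3/2}d\mu = \|\psi\|_{L^\infty}\le\|\psi'\|_{L^1}$, for a total of $3\|\psi'\|_{L^1}$.

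\emph{Step 3: the part near the origin, $0<\mu<1$.} Here I make the substitution $\mu=s^2$, $d\mu=2s\,ds$, which absorbs the $\mu^{-1/2}$ singularity entirely:
\[
  \int_0^1 e^{i\mu}\psi(\mu)\mu^{-1/2}\,d\mu=2\int_0^1 e^{is^2}\psi(s^2)\,ds,
\]
whose modulus is at most $2\|\psi\|_{L^\infty}\le 2\|\psi'\|_{L^1}$. Summing the two pieces gives the constant $5$ claimed in the lemma. The only nontrivial point is the handling of the $\lambda^{-1/2}$ singularity without bringing in $\|\chi\|_{L^\infty}$ explicitly, but since $\chi$ is compactly supported this norm is controlled by $\|\chi'\|_{L^1}$, so no genuine obstacle arises.
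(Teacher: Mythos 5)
Your proof is correct and follows essentially the same route as the paper: split the integral at a scale $\sim 1/|A|$, bound the piece near the origin by $\|\chi\|_{L^{\infty}}\le\|\chi'\|_{L^{1}}$, and integrate by parts once on the tail. Your rescaling to $|A|=1$ simply fixes the splitting point that the paper instead chooses by optimizing over $\epsilon$ (the paper takes $\epsilon=3/(2|A|)$, you effectively take $\epsilon=1/|A|$), and both yield the constant $5$.
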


\begin{proof}
  Notice the inequality
  \begin{equation*}
    \|\chi\|_{L^{\infty}}\le\|\chi'\|_{L^{1}}
  \end{equation*}
  since $\chi$ is compactly supported. We split the integral as
  $\int_{0}^{\epsilon}+\int_{\epsilon}^{+\infty}$ and we calculate
  \begin{equation*}
    \left|\int_{0}^{\epsilon}\right|\le
    \|\chi\|_{L^{\infty}}
    \int_{0}^{\epsilon}\lambda^{-1/2}d \lambda\le
    2 \epsilon^{1/2}\|\chi'\|_{L^{1}}.
  \end{equation*}
  For the remaining piece, integration by parts gives
  \begin{equation*}
    i A \int_{\epsilon}^{\infty}=
    -e^{i \epsilon A}\chi(\epsilon)\epsilon^{-1/2}-
    \int_{\epsilon}^{\infty}e^{i \lambda A}
    \partial_{\lambda}(\chi \lambda^{-1/2})
  \end{equation*}
  so that
  \begin{equation*}
    \left|A \int_{\epsilon}^{\infty}\right|\le
    \|\chi\|_{L^{\infty}}\epsilon^{-1/2}+
    \int_{\epsilon}^{\infty}|\chi'| \lambda^{-1/2}+
    \frac12\int_{\epsilon}^{\infty}|\chi| \lambda^{-3/2}
    \le
    3\|\chi'\|_{L^{1}}\epsilon^{-1/2}.
  \end{equation*}
  Thus the complete integral satisfies
  \begin{equation*}
    \left|\int_{0}^{\infty}\right|\le
    \|\chi'\|_{L^{1}}
    \left(
    2 \epsilon^{1/2}+\frac{3}{\epsilon^{1/2}A}
    \right)
  \end{equation*}
  and chosing $\epsilon=3/(2A)$ we obtain \eqref{eq:phase1}.
\end{proof}

\begin{lemma}\label{lem:phase2}
  For all $\chi\in C^{2}_{c}(\mathbb{R})$ and $A\neq0$,
  \begin{equation}\label{eq:phase2}
    \left|
    \int_{0}^{\infty}e^{i \lambda A}\chi(\lambda)
    \lambda^{1/2}d \lambda
    \right|\le
    5\|\chi''/2+\lambda \chi'\|_{L^{1}}\cdot |A|^{-3/2}.
  \end{equation}
\end{lemma}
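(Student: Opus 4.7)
The plan is to follow the same strategy as Lemma \ref{lem:phase1}, but with two integrations by parts rather than one. Specifically, I would split the integral at a parameter $\epsilon>0$,
\[
\int_0^\infty e^{i\lambda A}\chi(\lambda)\lambda^{1/2}d\lambda
= \int_0^\epsilon + \int_\epsilon^\infty,
\]
bound each piece separately, and then optimize $\epsilon$ at the end. The near-zero piece $\int_0^\epsilon$ would be controlled by the trivial bound
\[
\Bigl|\int_0^\epsilon e^{i\lambda A}\chi(\lambda)\lambda^{1/2}d\lambda\Bigr|
\le \tfrac{2}{3}\epsilon^{3/2}\|\chi\|_{L^\infty},
\]
using compact support of $\chi$ to control $\|\chi\|_{L^\infty}$ in terms of derivatives.

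For the tail $\int_\epsilon^\infty$ I would integrate by parts twice, using at each step $e^{i\lambda A}=\partial_\lambda(e^{i\lambda A})/(iA)$. The first IBP yields a boundary term at $\lambda=\epsilon$ (the term at infinity vanishes by compact support of $\chi$) plus the volume integral of $\partial_\lambda(\chi\lambda^{1/2})=\chi'\lambda^{1/2}+\chi\lambda^{-1/2}/2$ against $e^{i\lambda A}$, multiplied by $1/(iA)$. A second IBP on this integral produces another boundary term and a volume integral of $\partial_\lambda^2(\chi\lambda^{1/2})=\chi''\lambda^{1/2}+\chi'\lambda^{-1/2}-\chi\lambda^{-3/2}/4$ against $e^{i\lambda A}$, multiplied by $1/(iA)^2$. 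Combined with the $\epsilon^{3/2}\|\chi\|_{L^\infty}$ bound of the first piece, the dominant contributions scale like $\epsilon^{3/2}$ and $|A|^{-2}\epsilon^{-1/2}$; balancing by choosing $\epsilon\sim 1/|A|$ gives the desired $|A|^{-3/2}$ decay.

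The main obstacle is not the scaling argument, which is straightforward, but the careful bookkeeping needed to make the norm on the right-hand side come out to exactly $\|\chi''/2+\lambda\chi'\|_{L^1}$. After the two IBPs the naive volume term contains a $\chi\lambda^{-3/2}$ singularity, which combines with the boundary contribution $\chi(\epsilon)\epsilon^{-1/2}/A^2$; rather than estimate each piece by the triangle inequality, one must integrate by parts once more on the $\chi\lambda^{-3/2}$ term (or equivalently reorganize it using that $\partial_\lambda(\chi\lambda^{-1/2})=\chi'\lambda^{-1/2}-\chi\lambda^{-3/2}/2$) so that the surviving integrand can be written, after re-grouping, as a multiple of $\chi''/2+\lambda\chi'$ against some bounded oscillatory weight. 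An alternative route is the change of variables $\mu=\lambda^{1/2}$, which converts the integral to a Fresnel-type one with quadratic phase $\mu^{2}A$; two applications of $\mu\,e^{i\mu^2 A}=\partial_\mu e^{i\mu^2A}/(2iA)$ then reduce the problem to estimating $\int\chi(\mu^2)e^{i\mu^2A}d\mu$ (handled by Lemma \ref{lem:phase1} after reverting variables) plus a second integral whose integrand, reverted to the $\lambda$ variable, is proportional to $\chi''/2+\lambda\chi'$ times $\lambda^{-1/2}$. Either route leads to the claimed bound with constant $5$ by the same optimization in $\epsilon$ as in Lemma \ref{lem:phase1}.
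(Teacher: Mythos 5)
Your overall instinct is right, but neither of your two routes is carried to completion, and the first one has a genuine bookkeeping gap that you acknowledge but do not close. The paper's proof is a one-line reduction that avoids all of it: integrate by parts \emph{once} over the whole half-line $(0,\infty)$, with no splitting at $\epsilon$. The boundary term at $\lambda=0$ vanishes because $\chi(\lambda)\lambda^{1/2}\to0$, and the one at infinity vanishes by compact support, so
\begin{equation*}
  iA\int_{0}^{\infty}e^{i\lambda A}\chi(\lambda)\lambda^{1/2}\,d\lambda
  =-\int_{0}^{\infty}e^{i\lambda A}\bigl(\lambda\chi'+\chi/2\bigr)\lambda^{-1/2}\,d\lambda .
\end{equation*}
Now the amplitude $\lambda\chi'+\chi/2$ is again $C^{1}_{c}$, so Lemma \ref{lem:phase1} applies verbatim to the right-hand side and gives the bound $5\|(\lambda\chi'+\chi/2)'\|_{L^{1}}|A|^{-1/2}$; dividing by $|A|$ yields the $|A|^{-3/2}$ decay. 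All of the $\epsilon$-splitting and optimization you propose to redo is already packaged inside Lemma \ref{lem:phase1} and does not need to be repeated. Your ``alternative route'' via $\mu=\lambda^{1/2}$ is essentially this same computation in disguise, but note that a \emph{single} application of $\mu e^{i\mu^{2}A}=\partial_{\mu}(e^{i\mu^{2}A})/(2iA)$ already produces the identity above; applying it twice, as you propose, overshoots and does not reduce to Lemma \ref{lem:phase1} in the way you describe.

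The concrete gap in your primary route is the claim that after two integrations by parts on $[\epsilon,\infty)$ the boundary terms at $\lambda=\epsilon$ and the $\chi\lambda^{-3/2}$ volume term ``re-group'' into $5\|\chi''/2+\lambda\chi'\|_{L^{1}}|A|^{-3/2}$. This regrouping is exactly the content of the proof and is never exhibited; as written, your estimates only yield a bound of the form $C(\|\chi\|_{L^{\infty}},\|\chi'\|_{L^{1}},\|\chi''\|_{L^{1}})\,|A|^{-3/2}$ with an unspecified constant, which is weaker than the statement. One further remark: carrying out the clean argument gives the norm $\|(\lambda\chi'+\chi/2)'\|_{L^{1}}=\|\lambda\chi''+\tfrac32\chi'\|_{L^{1}}$, not $\|\chi''/2+\lambda\chi'\|_{L^{1}}$ as printed; the norm in the statement appears to be a typo, so you should not spend effort trying to force your bookkeeping to land on that exact expression.
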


\begin{proof}
  It is sufficient to apply the previous Lemma to the identity
  \begin{equation*}
    iA\int_{0}^{\infty}=
      -\int_{0}^{\infty}e^{i \lambda A}
      \left(\lambda
      \chi'+\chi/2
      \right)\lambda^{-1/2}d \lambda.
  \end{equation*}
\end{proof}


\bibliographystyle{plain}
\bibliography{/Users/piero/Documents/Biblioteca/-bib/bibliodatabase.bib}





\end{document}